\documentclass[11.5pt]{article}
\usepackage{lipsum}

\usepackage[margin=1in]{geometry}
\usepackage{fancyhdr}
\usepackage{graphicx}
\usepackage{mathtools}
\pagestyle{myheadings}
\pagestyle{fancy}
\fancyhf{}
\chead{An exact sin$\Theta$ formula for matrix perturbation analysis and its applications}
\rhead{\thepage}
\usepackage{amsmath, amssymb, amsfonts, amsbsy, amsthm, latexsym, color}
\usepackage{ulem}
\usepackage{pifont}
\newcommand{\cmark}{\ding{51}}%
\newcommand{\xmark}{\ding{55}}%
\usepackage{mathtools}
\usepackage{mathrsfs}
\usepackage{makecell}
\usepackage{cancel}

\usepackage{hyperref} 
\usepackage{empheq}

\usepackage[T1]{fontenc}
\usepackage{authblk}
\usepackage{graphicx}
\usepackage{enumerate}
\usepackage{caption}
\usepackage{subcaption}
\newcommand{\RN}[1]{%
  \textup{\uppercase\expandafter{\romannumeral#1}}%
}
\newcommand{\rom}[1]{\uppercase\expandafter{\romannumeral #1\relax}}
\everymath{\displaystyle}
\textwidth      6.5in
\oddsidemargin  0.0in
\evensidemargin 0.0in
\setlength{\topmargin}{.0in}
\setlength{\textheight}{8.5in}
\usepackage{float}

\usepackage[ruled,vlined]{algorithm2e}
\newtheorem{theorem}{Theorem}[section]
\newtheorem{proposition}[theorem]{Proposition}
\newtheorem{lemma}[theorem]{Lemma}
\newtheorem{corollary}[theorem]{Corollary}

\theoremstyle{definition}
\newtheorem{definition}{Definition}[section]

\newtheorem{remark}[theorem]{Remark}

\numberwithin{equation}{section}

\newcommand{\C}{{\mathbb{C}}}

\usepackage{romannum}
\setlength{\parindent}{0pt}
\setlength{\parskip}{1em}

\newcommand{\rank}{\text{rank}}


\begin{document}
\pagenumbering{arabic}
\title{An exact sin$\Theta$ formula for matrix perturbation analysis and its applications}
\author{He Lyu, Rongrong Wang}
\affil{Michigan State University}

%
%
%


\date{\today}

\maketitle

\begin{abstract}
In this paper, we establish a useful set of formulae for the $\sin\Theta$ distance between the original and the perturbed singular subspaces. These formulae explicitly show that how the perturbation of the original matrix propagates into singular vectors and singular subspaces, thus providing a direct way of analyzing them. Following this, we derive a collection of new results on SVD perturbation related problems, including a tighter bound on the $\ell_{2,\infty}$-norm of the singular vector perturbation errors under Gaussian noise,  a new stability analysis of the Principal Component Analysis and an error bound on the hard singular value thresholding operator. For the latter two, we consider the most general rectangular matrices with full matrix rank.

\end{abstract}
\section{Introduction}
Singular value decomposition (SVD) is a fundamental tool in computational mathematics. Many widely used algorithms in numerical analysis and statistics (e.g., principal component analysis \cite{pearson1901liii,cai2021subspace,abbe2022}, matrix completion \cite{candes2012exact,candes2010matrix,keshavan2009matrix}, matrix denoising \cite{donoho2014minimax,gavish2014optimal}, community detection \cite{yun2014accurate,chin2015stochastic,abbe2017community}, {graph inference \cite{tang2018limit,athreya2021estimation},} etc.) involve the SVD computation. Since the singular vectors and singular subspaces can be sensitive to noise, a careful study of the stability of  SVD is necessary. 

For a given matrix $A$, let $A=U\Sigma V^T$ be its SVD, and $\widetilde{A}=\widetilde{U}\widetilde{\Sigma}\widetilde{V}^T$ be the SVD of the noisy version $\widetilde{A}=A+ \Delta A$. In perturbation analysis, our goal is to characterize the robustness of the left or right singular subspaces under an arbitrary perturbation $\Delta A$.

Classical subspace perturbation results, including Davis-Kahan's theorem \cite{davis1970rotation}, Wedin's $\sin\Theta$ theorem \cite{weyl1912asymptotische} and many others (e.g., \cite{stewart2006perturbation,dopico2000note,dopico2002perturbation}), provide bounds on the  sin$\Theta$ angles between original and perturbed subspaces. For general symmetric matrices with no statistical assumption on the noise,  Davis-Kahan's $\sin\Theta$ bound is already tight and easy to use. However, when the perturbation matrix $\Delta A$ is random, Davis-Kahan's bound becomes a random quantity. To solve this problem, 
 deterministic variants of the Davis-Kahan's $\sin\Theta$ theorem were introduced in \cite{yu2015useful,cai2018rate} that are particularly useful for statistical applications. {Additionally, various asymptotic bounds on eigenvector perturbations have also been derived in \cite{cape2019signal,tang2018limit,fan2022asymptotic,agterberg2022entrywise}.}

Recently, it was noticed that in addition to  norms of $\sin\Theta$ angles, one may derive perturbation bound under other metrics, which can bring extra benefits. One such metric is the $\ell_{2,\infty}$-norm of the difference between the original and perturbed singular vectors. In applications such as clustering, classification, and dimension reduction,  the $\ell_{2,\infty}$ metric is more accurate in the sense {that it provides finer entry-wise error bounds} of the embedded data. In addition, bounds on the $\ell_{2,\infty}$-norm are possible to be much smaller than those on the $\sin\Theta$ angle \cite{cape2019two}, which is another benifit of using it. Recently, many interesting results have been derived along this direction, including 
\cite{abbe2020entrywise,chen2021asymmetry,cheng2020inference,cape2019two,abbe2022,cai2021subspace}, but the problem is still open. 
 
Besides the perturbation bounds on individual factors (i.e., $U$, $\Sigma$, $V$) in the SVD, bounds on combinations of the factors are also desired by many applications. For example, in PCA, the target quantity is {the PC scores $U\Sigma$. Several works related to perturbation of this quantity exist. In  \cite{blanchard2007statistical},  the perturbation error of eigenspaces of kernel PCA is studied. In \cite{abbe2022}, an $\ell_p$ stability analysis of PC scores is developed for the hollowed version of PCA, which is the normal PCA with the diagonal entries of the Gram matrix  removed before carrying out the SVD. To the best of our knowledge, a tight bound for the PC scores of the vanilla PCA is still missing. Another example that requires perturbation analysis on combinations of factors is  singular value truncation \cite{tanner2013normalized,donoho2014minimax,cai2010singular}, where the target quantity whose perturbation we care about is the best rank-$r$ approximation, $A_r$, of $A$. Previously, sharp perturbation bounds on $A_r$ only exist for low-rank matrices {\cite{luo2021schatten},} and that for general full-rank matrices is still missing.} 
 


In this paper, we present several new perturbation results including an improved $\ell_{2,{\infty}}$-norm bound on the singular vector perturbation under Gaussian noise,  a perturbation bound for the hard singular value thresholding operator applied to full-rank matrices, and a useful error bound for the perturbation of the PC scores. These new results are either derived or motivated by the new set of sin$\Theta$ expressions we shall present in Section \ref{sec:3}.


\section{Collection of new perturbation results on SVD and its derivatives}\label{sec:2}
Before stating our main mathematical tool in Section \ref{sec:3}, we first present the three aforementioned implications, as they might be of independent interests.
\subsection{A new bound on $\ell_{2,\infty}$-norm of the singular vector perturbation}\label{sec:2.1}

Deriving tight $\ell_{2,\infty}$-norm bounds for the singular vector perturbation is an active research area in statistics  \cite{abbe2020entrywise,chen2021asymmetry,cheng2020inference,cape2019two}.
For many machine learning tasks (e.g., spectral clustering and Principal Component Analysis),  the $\ell_{2,\infty}$-norm provides a better characterization of the embedding quality  than the $\sin\Theta$ angle as it is a point-wise metric reflecting the error of individual embedding. {Previous literature has presented various bounds on the $\ell_{2,\infty}$-norm of singular vector perturbation. To list a few, \cite{abbe2020entrywise} established an $\ell_{2,\infty}$-norm  bound for eigenspaces of symmetric random matrices whose expectations are of low-rank. The result is shown to be useful for analyzing spectral methods on the stochastic block model. \cite{abbe2022} developed an $\ell_{2,p}$ analysis for a hollowed  PCA, for any $1\leq p\leq \infty$. Hallowed PCA in the presence of missing data has also been investigated in \cite{cai2021subspace}, with a special focus in the case where the number of features is significantly larger than the number of samples. Our problem setting and approach are more closely related to \cite{cape2019two}, which established an entry-wise singular subspace perturbation bound for low-rank matrices through a Procrustean matrix decomposition.} 

{In this section, we are interested in the case where each entry in the perturbation matrix follows i.i.d. Gaussian distribution. Statistical applications that fall into this regime include Gaussian Mixture Model with isotropic noise covariance matrix \cite{loffler2021optimality}.}

To facilitate the illustration, we introduce some notation. For rectangular matrices $A, \ \Delta A \in \mathbb{R}^{n\times m}$, we can write the conformal SVD of the original matrix $A$ and its perturbed version $\widetilde A=A+\Delta A$ as 
\begin{equation}\label{eq:conformal}
A=U\Sigma V^T=\begin{pmatrix}
U_1 & U_2
\end{pmatrix}
\begin{pmatrix}
\Sigma_1&\ \\
\ & \Sigma_2 
\end{pmatrix}
\begin{pmatrix}
V_1^T \\
V_2^T
\end{pmatrix},\
\widetilde A=\widetilde U\widetilde\Sigma \widetilde V^T=\begin{pmatrix}
\widetilde U_1 & \widetilde U_2
\end{pmatrix}
\begin{pmatrix}
\widetilde \Sigma_1&\ \\
\ & \widetilde \Sigma_2 
\end{pmatrix}
\begin{pmatrix}
\widetilde V_1^T \\
\widetilde V_2^T
\end{pmatrix}.
\end{equation}
Here $U_1\in\mathbb{R}^{n\times r},\ U_2\in\mathbb{R}^{n\times (n-r)},\ V_1\in\mathbb{R}^{m\times r},\ V_2\in\mathbb{R}^{m\times (m-r)}$,\ {$[U_1,U_2]\in\mathbb{R}^{n\times n}, [V_1,V_2]\in\mathbb{R}^{m\times m}$ are orthogonal matrices}, $\Sigma_1=\text{diag}\{\sigma_1,\sigma_2,...,\sigma_r\}\in\mathbb{R}^{r\times r}$, $ \Sigma_2=\text{diag}\{\sigma_{r+1},\sigma_{r+2},...,\sigma_{\min\{m,n\}}\}\in\mathbb{R}^{(n-r)\times (m-r)}$, {and the singular values are indexed in non-increasing order, i.e., $\sigma_1\geq\sigma_2\geq\cdots\geq\sigma_r\geq\sigma_{r+1}\geq\cdots\geq\sigma_{\min\{m,n\}}$.} When $n\neq m$, $\Sigma_2$ is rectangular, and the extra columns/rows are padded with $0$s. The decomposition of $\widetilde A$ has a similar structure with {non-increasing singular values} $\widetilde\Sigma_1=\text{diag}\{\widetilde\sigma_1,\widetilde\sigma_2,...,\widetilde\sigma_r\}\in\mathbb{R}^{r\times r},\ \widetilde\Sigma_2=\text{diag}\{\widetilde\sigma_{r+1},\widetilde\sigma_{r+2},...,\widetilde\sigma_{\min\{m,n\}}\}\in\mathbb{R}^{(n-r)\times (m-r)}$. 

For a matrix $A$, the $\ell_{2,\infty}$-norm is defined as 
\[
\|A\|_{2,\infty} : = \sup\limits_{\|x\|_2=1} \|Ax\|_{\infty}.
\]
One can show that $\|A\|_{2,\infty} = \max_{i} \|a_i\|_2$, where $a_i$ is the $i$th row of $A$.

In dimensionality reduction, to characterize the difference between $\widetilde{U}_1$ and $U_1$, a desirable measure is $\min_{Q\in \mathbb{O}_r}\|\widetilde{U}_1-U_1Q\|_{2,\infty}$, where $Q$ is a rotation matrix and $\mathbb{O}_r$ is the orthogonal matrix group in dimension $r$. In other words, we consider the difference between $\widetilde{U}_1$ and $U_1$ after they are maximally aligned by a proper rotation $Q$.

A well known property about $\min_{Q\in \mathbb{O}_r}\|\widetilde{U}_1-U_1Q\|_{2,\infty}$ is that it is smaller than a constant multiple of the $\sin\Theta$ angle \cite{cai2018rate}
\begin{equation} \label{eq:trivial}
\min_{Q\in \mathbb{O}_r}\|\widetilde{U}_1-U_1Q\|_{2,\infty} \leq \min_{Q\in \mathbb{O}_r}\|\widetilde{U}_1-U_1Q\| \leq  \sqrt{2}\|\sin\Theta(U_1,\widetilde{U}_1)\|.
\end{equation} 
This provides a trivial bound on the $\ell_{2, \infty}$-norm error, but can be very pessimistic.
To see why, we need the following definition of incoherent matrices.
\begin{definition}[Incoherent] \label{def:inco} A matrix $U \in \mathbb{R}^{n\times r}$ {with orthonormal columns} $(n\geq r)$ is said to be
$\mu$-incoherent $(\mu\geq 1)$ if $\|U\|_{2,\infty}\leq \mu\sqrt{\frac{r}{n}}$.
\end{definition}
Suppose the perturbation matrix $\Delta A$ has  i.i.d. Gaussian entries $N(0,\sigma^2)$, and the matrix $U_1$ of leading singular vector is $\mu$-incoherent. Then the bound \eqref{eq:trivial} is pessimistic in that it {only gives a bound of $O(1)$, while the bound we are about to provide is $O\left(\frac{r}{\sqrt n}\right)$}. 
Explicitly, \eqref{eq:trivial} combined with  Wedin's sin$\Theta$ bound yields that, provided $$c_1\sigma \sqrt{\max\{n,m\}} \leq \sigma_r(A)-\sigma_{r+1}(A), \quad \textrm{(gap condition)}$$  it holds with high probability that,
\begin{equation}\label{eq:l2infnaive}
\min_{Q\in \mathbb{O}_r}\|\widetilde{U}_1-U_1Q\|_{2,\infty} \leq \sqrt{2}\min\left\{1,\frac{c_2\sqrt {\max\{n,m\}}\sigma}{\sigma_r-\sigma_{r+1}}\right\}\sim O(1). 
\end{equation}
Here $c_1,c_2$ are absolute constants, and the big O notation is with respect to the size variables $n$ and $m$.
The gap condition implies that the noise level $\sigma$ can be as large as $O(1/\sqrt{\max\{n,m\}})$, so the order of  $\sigma\sqrt{\max\{n,m\}}$ is $O(1)$. Hence the bound in \eqref{eq:l2infnaive} is $O(1)$. 


We show that \eqref{eq:l2infnaive} is pessimistic by deriving a bound that is {of order $O(\frac{r}{\sqrt n})$}.

\begin{theorem}\label{thm:2toinf_full} Suppose $\widetilde{A}=A+\Delta A \in \mathbb{R}^{n\times m}$, $\bar n :=\max\{n,m\}$, $\Delta A$ has i.i.d. $N(0,\sigma^2)$ entries, and assume $\sigma_r(A)-\sigma_{r+1}(A)>21\sigma\sqrt{ \bar n}$. Then with probability {at least} $1-\frac{c}{n^2}$,
\begin{equation}\label{eq:thm}
\min_{Q\in \mathbb{O}_r}\|\widetilde{U}_1 -U_1Q \|_{2,\infty} \leq c_1\|U_1\|_{2,\infty}\frac{\sigma^2 \bar n}{(\sigma_r( A)-\sigma_{r+1}(A))^2}+c_2\sigma\frac{R(r, n)}{\sigma_r( A)-\sigma_{r+1}( A)},
\end{equation}
where 
$$
R(r,n) = \left\{\begin{aligned}
&\sqrt r+\sqrt{\log  n},\quad & \textrm{if $A$ is rank $r$}; \\
& r+\sqrt{r \log n}, & \textrm{else},
\end{aligned}\right.
$$
and {$c,\ c_1,\ c_2, \sigma$ are absolute constants independent of $n$ and $m$. }
\end{theorem}
The following corollary of Theorem \ref{thm:2toinf_full} may be easier to digest.
\begin{corollary}\label{col:2toinf_full} Under the same assumption as in Theorem \ref{thm:2toinf_full}, if we additionally assume that the matrix $U_1$ holding the leading $r$ left singular vectors of $A$ is $\mu_1$-incoherent {with some constant $\mu_1$}, then with probability {at least} $1-\frac{c}{n^2}$,
\[
\min_{Q\in \mathbb{O}_r}\|\widetilde{U}_1-U_1Q\|_{2,\infty} \leq C \cdot \frac{\mu_1 \sqrt r+\sqrt{r\log n}+r}{\sqrt n} \sim O\left(\frac{r}{\sqrt n}\right),
\]
where $c$ and $C$ are absolute constants, {and the logarithmic factors are omitted in the big O expression}. A similar result holds for the right singular subspace.
\end{corollary}
For low-rank matrices, Corollary \ref{col:2toinf_full} can be further improved.
\begin{corollary}\label{col:2toinf_low} {When $A$ is of rank-$r$, \eqref{eq:thm} reduces to
\begin{equation}\label{eq:res_low_rank}
\min_{Q\in \mathbb{O}_r}\|\widetilde{U}_1 -U_1Q \|_{2,\infty} \leq c_1\|U_1\|_{2,\infty}\frac{\sigma^2 \bar n}{\sigma_r^2( A)}+c_2\sigma\frac{\sqrt{r}+\sqrt{\log n}}{\sigma_r(A)}.
\end{equation}}
Under the same assumption as in Corollary \ref{col:2toinf_full},
with probability {at least} $1-\frac{c}{n^2}$,
\[
\min_{Q\in \mathbb{O}_r}\|\widetilde{U}_1-U_1Q\|_{2,\infty} \leq C \cdot \frac{\mu_1 \sqrt r+\sqrt{r\log n}+\sqrt r}{\sqrt n} \sim O\left(\sqrt{\frac{ r}{ n}}\right),
\]
where $c$ and $C$ are absolute constants. A similar result holds for the right singular subspace.
\end{corollary}
{
\begin{remark}
    To get a sense of the tightness of this corollary, we mention the following minimax lower bound derived in Theorem \ref{thm:minimax}  \cite{cai2021subspace}. The original result considered the case of missing entries with probability $1-p$, we plug in $p=1$ to reduce it to the fully observed case. \eqref{eq:minimax} below indicates that when $A$ is of rank-$r$, $n\asymp m$, and $\sigma_r\asymp\sigma\sqrt{n}$, the result in Corollary \ref{col:2toinf_low} matches the minimax lower bound $O(\frac{1}{\sqrt{n}})$ up to a factor of $\sqrt r$.
\end{remark}
\begin{theorem}[Theorem 3.3 in \cite{cai2021subspace}]\label{thm:minimax}
    Suppose $1\leq r\leq n/2$, and $(\Delta A)_{ij}\ \stackrel{\mathclap{i.i.d.}}{\sim}\  N(0,\sigma^2)$. Define
    \[
    \mathcal{M}:=\{B\in\mathbb{R}^{n\times m}|\text{rank}(B)=r,\sigma_r(B)\in [0.9\sigma_r^*,1.1\sigma_r^*]\}.
    \]
    Denote by $U(B)\in\mathbb{R}^{n\times r}$ the matrix containing the $r$ left singular vectors of $B$. Then there exists some universal constant $c_{lb}>0$ such that
    \begin{equation}\label{eq:minimax}
        \inf_{\hat U}\sup_{A\in \mathcal{M}}\mathbb{E}\big[\min_{{Q}\in\mathbb{O}_r}\|\hat UQ-U(A)\|_{2,\infty}\big]\geq c_{lb}\min\left\{\frac{\sigma^2}{{\sigma_r^*}^2}\sqrt{nm}+\frac{\sigma}{{\sigma_r^*}}\sqrt{n},1\right\}\frac{1}{\sqrt{n}},
    \end{equation}
    where the infimum is taken over all estimators for $U(A)$ based on the noisy observation $A+\Delta A$.
\end{theorem}}
\begin{table}
\centering
\resizebox{\textwidth}{!}{
\begin{tabular}{c|c|c|c|c}
\hline
\ & \thead{Achieve $O(\sqrt{r/n})$ for\\rank-$r$ matrices} & Do not require $\kappa=O(1)$ &Do not require addition assumptions &\thead{Achieve $O(r/\sqrt{n})$ for\\general matrices} \\ 
\hline 
\cite{cape2019two} & \xmark & \cmark & \cmark &\xmark \\
\cite{lei2019unified} & \cmark & \cmark &\xmark &\xmark\\
\cite{abbe2020entrywise} & \cmark & \xmark & \cmark &\xmark \\
\cite{cai2021subspace} & \cmark & \xmark & \xmark &\xmark \\
This paper & \cmark & \cmark & \cmark &\cmark\\
\hline
\end{tabular}
}
\caption{{Comparison of result in this paper with existing works about the $\ell_{2,\infty}$ bound. Here we compare these results under the assumptions that $\Delta A$ has i.i.d. $N(0,\sigma^2)$ entries, $\|U_1\|_{2,\infty}\leq c\sqrt{r/n}$, where $\sigma,c$ are constants, and $n\asymp m$. Here $\kappa=\frac{\sigma_1(A)}{\sigma_r(A)}$ is the condition number of $A$. For simplicity, we ignore $\log n$ in the big O notation.} }\label{table:datasets}
\label{tab:1}
\end{table}
{We take a moment here to make a comparison between several existing works with the results derived in this paper (Theorem 2.1 and its corollaries). Table \ref{tab:1} summarizes the $\ell_{2,\infty}$-norm bound and requirements in each work. The purpose of the comparison is to show the effectiveness of the derived results under the setting where the perturbation matrix $\Delta A$ has i.i.d. Gaussian entries and the matrix $A$ is nearly square ($n\asymp m$). To be fair, we would like to mention that some of the existing results might be better suited for other settings (such as when $m\gg n$). }

From the table, we can see that the result in this paper achieves the $O({\sqrt{r/n}})$ order upper bound for low-rank matrices and $O(r/\sqrt{n})$ for full-rank matrices. In comparison, previous results in \cite{cape2019two} do not achieve the $O({\sqrt{r/n}})$ order for the low-rank case {under the assumptions as in Corollary \ref{col:2toinf_low}}.  The result in  \cite{lei2019unified} achieves the same order of accuracy $O({\sqrt{r/n}})$ but only for rank-$r$ matrices and under a more restrictive gap condition (below is a simplified version)
\[
\sigma_r-\sigma_{r+1}\geq c\left(\min\{\frac{\sigma_1}{\sigma_r},2r\}\sigma\sqrt{\bar n}+\|A\|_{2,\infty}\right),
\]
where $c$ is a constant.
\cite{abbe2020entrywise,chen2021asymmetry,cheng2020inference,eldridge2018unperturbed} consider the perturbation of one eigen-vector instead of a set of eigen-vectors. {\cite{abbe2020entrywise} obtained an $O({\sqrt{r/n}})$ perturbation bound for low-rank matrices, but for full rank matrices, their bound is $O(1)$. Besides, the bound contains in it the condition number $\widetilde{\kappa}(A)=\frac{\sigma_1(A)}{\sigma_r(A)-\sigma_{r+1}(A)}$, which potentially makes it very large. Likewise, the result in \cite{cai2021subspace} also contains a condition number ${\kappa}(A)=\frac{\sigma_1(A)}{\sigma_r(A)}$. In contrast, the condition number does not show up in Theorem \ref{thm:2toinf_full}, hence it is more suitable for matrices with large condition numbers.} In addition, all previous analyses {except for the one in \cite{cape2019two}} are based on techniques developed for eigen-decomposition. When they are applied to rectangular matrices $A \in \mathbb{R}^{n\times m}$, the matrix needs to be symmetrized, causing the resulting upper bounds potentially depend on both the left and the right singular vectors. In contrast, our bound \eqref{eq:thm} is one-sided, in that the perturbation of $U_1$ only depends on $\|U_1\|_{2,\infty}$ but not $\|V_1\|_{2,\infty}$.

{We make a more detailed comparison between the present Theorem \ref{thm:2toinf_full} and Theorem 4.3 in \cite{cape2019two}. 
For a fair comparison, we restrict ourselves to the setting where both \cite{cape2019two} and our results hold, that is, when the data matrix $A\in\mathbb{R}^{n\times m}$ is of rank $r$  and the perturbation matrix $\Delta A$ has i.i.d. Gaussian  $N(0,\sigma^2)$  entries with some constant $\sigma$. Theorem 4.3 in \cite{cape2019two} reads if $n\geq m,\ \sigma_r(A)\gtrsim\sigma(n/\sqrt m)$, then
    \begin{equation}\label{eq:prevbound}
        \min_{Q\in\mathbb{O}_r}\|\widetilde{U}_1-U_1Q\|_{2,\infty}\leq C_r\left(\frac{\log n}{\sigma_r(A)}\right)\left(1+\frac{m}{\sigma_r(A)}+\frac{\sqrt{m}}{\log n}\|U_1\|_{2,\infty}\right),
    \end{equation}
    where $C(r) \sim O(\sqrt r)$.
    Comparing \eqref{eq:prevbound} and \eqref{eq:res_low_rank}, we notice that under the allowable gap condition $\sigma_r(A)\gtrsim\sigma\sqrt{\max\{n,m\}}$ and the incoherence condition $\|U_1\|_{2,\infty}\leq c_r/\sqrt{n}$, the second term in \eqref{eq:prevbound} can be as large as $O(1)$, which is much larger than our bound $O(\sqrt{r/n})$.
}

Admittedly, our current result only holds for Gaussian perturbation {due to the proof techniques we use}. We leave the study of other perturbation types as future work. 

\subsection{Stability of principal component analysis (PCA)}\label{sec:PCA_stability}
{As one of the arguably most popular tools for data visualization and exploration, PCA is used to extract the main features from a dataset or to reduce the dimensionality of the data \cite{nguyen2019ten}. There is a vast literature on the analysis of PCA. Most previous works focused on the consistency of Principal Component directions or eigenvalues \cite{chen2021spectral,cai2021subspace,vaswani2017finite,narayanamurthy2020fast}, while the stability of PC scores (i.e., the projection of data matrix $A$ onto its PC directions, using the notations in this paper, PC scores are given by $U_1\Sigma_1$) is less explored, despite its importance in the analysis of various spectral methods.} 

{There are several relevant works investigating the stability of PC scores, but their analyses were under different settings. For completeness, we include a brief review here. \cite{abbe2022} developed an $\ell_p$ analysis for a hollowed version of PCA, where SVD is conducted on the hollowed gram matrix $G=\mathcal{H}(AA^T)$. Here, $A$ is the data matrix and the operator $\mathcal{H}(\cdot)$ zeros out all diagonal entries of a square matrix. The PC scores are given by $U\Lambda^{\frac{1}{2}}$, where $U$ and $\Lambda$ are the eigenvector matrix and corresponding eigenvalues of $G$. Perturbation bounds on PC scores in $\ell_{2,p}$ norm were derived in \cite{abbe2022} to characterize entrywise behaviour of PCA. Another line of research studied adjacency spectral embedding (ASE) for random dot product graphs (RDPG), which is closely related to PCA in that they both return a weighted singular vector matrix. Central limit theorems for rows of ASE have been provided in \cite{tang2018limit,athreya2021estimation}.} 

{Different from these previous studies, in this section, we focus on the stability of PC scores of the original PCA algorithm, which does not use the hallowed gram matrix.} Given a centered data matrix $A$ and its conformal SVD, PCA returns $U_1\Sigma_1$ (or $V_1\Sigma_1$) as the low dimensional projection into $\mathbb{R}^r$. Due to the possible similarity among singular values within $\Sigma_1$, the PCA embedding may be subject to rotations. Hence when computing the error, we mode out this rotation and aim to bound $\min_{Q\in \mathbb{O}_r}\|U_1\Sigma_1-\widetilde{U}_1\widetilde{\Sigma}_1Q\|$ or $\min_{Q\in \mathbb{O}_r}\|U_1\Sigma_1-\widetilde{U}_1\widetilde{\Sigma}_1Q\|_F$, where $\|\cdot\|$ is the spectral norm. 

The main difference between these quantities and the sin$\Theta$ angle between singular subspaces is that $U_1$ is now multiplied by the corresponding singular values, and it is the perturbation of this product that we want to analyze. Naively, one may expect that the perturbation of $U_1\Sigma_1$ is approximately equal to the perturbation of $U_1$ times $\|\Sigma_1\|$ plus the perturbation of $\Sigma$ times $\|U_1\|$, and the perturbation of $U_1$ can in turn be controlled by the sin$\Theta$ theorem. This argument leads to
\begin{equation}\label{eq:sigma1}
\min_{Q\in \mathbb{O}_r}\|U_1\Sigma_1-\widetilde{U}_1\widetilde{\Sigma}_1Q\| \leq  c\cdot \frac{\sigma_1(A)\|\Delta A\|}{\sigma_r-\sigma_{r+1}},
\end{equation} where $c$ is some absolute constant. {However, this bound is quite large due to the existence of $\sigma_1(A)$ in the numerator. Noticing that $\sigma_1(A)$ appears in \eqref{eq:sigma1} because we consider $U_1\Sigma_1$ as a whole, in the following theorem, we show that the perturbed singular vectors corresponding to different singular values actually have different levels of stability, which in turn enables a tighter bound on the PC scores. More specifically, the next theorem shows that the singular vectors associated with larger singular values are more stable.}
\begin{theorem}\label{thm:single}
For $j=1,...,r$, let $\sin\Theta(\widetilde{u}_j, U_1)$ be the sin$\Theta$ angle between the $j$th left perturbed singular vector $\widetilde{u}_j$ and the leading $r$-dimensional singular subspace $span(U_1)$ of $A$. Then provided that $3\|\Delta A\| \leq  \sigma_r -\sigma_{r+1}$, we have
 \[
\|\sin\Theta(\widetilde{u}_j, U_1)\| \leq \frac{C\|\Delta A\|}{\sigma_j-\sigma_{r+1}}.
\]
where $C$ is some universal constant and by definition $\|\sin\Theta(\widetilde{u}_j, U_1)\| \equiv \|\widetilde{u}_j^T U_2\|$, $U_2$ is the orthogonal complement of $U_1$.
\end{theorem}
The different levels of stability of singular vectors observed in Theorem \ref{thm:single} will help us get rid of the $\sigma_1(A)$ and establish a tighter bound on the PC scores.
\begin{theorem}\label{thm:pca}
$\widetilde A=A+\Delta A$, $U_1\Sigma_1$ is the PCA embedding of $A$ and $\widetilde{U}_1\widetilde{\Sigma}_1$ is that of $\widetilde{A}$, we have
$$\min_{Q\in \mathbb{O}_r}\|U_1\Sigma_1-\widetilde{U}_1\widetilde{\Sigma}_1Q\|\leq 3\|\Delta A\|+3\sigma_{r+1}\min\left\{\frac{2\|\Delta A\|}{\sigma_r-\sigma_{r+1}},1\right\}, 
$$
\vskip -0.7cm
\begin{align*}
\min_{Q\in \mathbb{O}_r}\|U_1\Sigma_1-\widetilde{U}_1\widetilde{\Sigma}_1Q\|_F&\leq\left(2\|(\Delta A)_r\|_F^2+3\left(\|(\Delta A)_r\|_F+\|(\Sigma_2)_r\|_F\min\left\{\frac{2\|\Delta A\|}{\sigma_r-\sigma_{r+1}},1\right\}\right)^2\right)^{1/2}\\&\quad\ +\|(\Delta A)_r\|_F+\|(\Sigma_2)_r\|_F\min\left\{\frac{2\|\Delta A\|}{\sigma_r-\sigma_{r+1}},1\right\}.
\end{align*}
Here $(\Delta A)_r$ is the best rank-$r$ approximation of $\Delta A$.
\end{theorem}
The upper bound is tighter than \eqref{eq:sigma1} and can be used to facilitate the error analysis of PCA-related methods (e.g., \cite{little2018analysis}).
\begin{remark}
When $A$ is rank-$r$, the above result reduces to
$$\min_{Q\in \mathbb{O}_r}\|U_1\Sigma_1-\widetilde{U}_1\widetilde{\Sigma}_1Q\|\leq 3\|\Delta A\|,$$
$$
\min_{Q\in \mathbb{O}_r}\|U_1\Sigma_1-\widetilde{U}_1\widetilde{\Sigma}_1Q\|_F\leq(\sqrt{5}+1)\|(\Delta A)_r\|_F.
$$
\end{remark}
\begin{remark}
    {A tight $\ell_{2,p}$ norm perturbation bound of PC scores for hollowed PCA was developed in \cite{abbe2022}. However, unlike the previous theorem for vanilla PCA, it seems not possible to eliminate $\sigma_1(A)$ from the bound in hollowed PCA, due to the fact that the hollowed PCA conducts the decomposition on the gram matrix instead of the original data matrix $A$. In the noisy setting, the noise on the Gram matrix contains the term $A^T\Delta A$, whose norm may reach $O(\sigma_1(A)\|\Delta A\|)$ with $\sigma_1$ included in the expression.  } 

\end{remark}

\subsection{A new stability result on singular value {truncation}}\label{sec:SVT_stability}
In addition to studying the perturbations of $U_1$ and $U_1\Sigma_1$, we also investigate the stability of the hard singular value thresholding operator,  which provides the best rank-$r$ approximation of $A$, i.e., $A_r=U_1\Sigma_1V_1^T$. This operator, also known as singular value truncation, is widely used in matrix completion and matrix denoising for promoting low-rankness or reducing the noise \cite{tanner2013normalized,donoho2014minimax,cai2010singular,gavish2014optimal}. Let $\widetilde{A}=A+\Delta A$ be the noisy matrix, and let $\widetilde{A}_r$ denote its best rank-$r$ approximation. We characterize the stability of the hard singular value thresholding operator through a bound on $\|A_r-\widetilde{A}_r\|$.
{Previous works have investigated the stability of truncated SVD \cite{luo2021schatten,vu2021perturbation}, and tight error bounds for low-rank matrices have been derived. However, a tight bound for general matrices is still missing in the literature.} 

For rank-$r$ matrix $A$ with $r<\min\{m,n\}$, the following perturbation result was obtained in \cite{luo2021schatten},
\begin{equation}\label{eq:thr}
\|A-\widetilde{A}_r\| \leq 2\|\Delta A\|.
\end{equation}
Since in practice, $A$ may not be exactly rank-$r$, we hope to establish upper bounds for general full-rank matrices. 

We comment that although we can easily derive an upper bound of full-rank matrices from that of the low-rank ones, the resulting bound is not tight. Explicitly, for a full-rank matrix $A$, $A_r$ is of low rank, so we can apply \eqref{eq:thr} on $A_r$ to get
\begin{align*}
\|A_r- \widetilde{A}_r\|&= \|A_r - (A+\Delta A)_r\| = \|A_r - (A_r+\widetilde{E})_r\|
\leq 2\|\widetilde{E}\| \leq 2\|\Delta A\|+2\|A-A_r\| =2\|\Delta A\|+2\sigma_{r+1},
\end{align*}
where $\widetilde{E}=\Delta A+A-A_r$ and the first inequality used \eqref{eq:thr}. 

Apparently, this bound is not optimal as it does not shrink to 0 when $\Delta A\rightarrow 0$. This then motivates us to establish the following tighter bound.

\begin{theorem}[Perturbation result on singular value truncation]\label{thm:thresh} Let $A \in \mathbb{R}^{n\times m}$ be any $n\times m$ matrix and $\widetilde{A}=A+\Delta A$ be its noisy version. Denote by $A_r$ and $\widetilde A_r$ their rank-$r$ thresholding with all but the first $r$ singular values set to $0$. Let $\sigma_i$ be the $i$th largest singular value of $A$ and $\Sigma_2$ be the diagonal matrix containing $\sigma_{r+1},...,\sigma_{\min}$ (the $(r+1)$'th to the last singular values of $A$) on the diagonal. Then
\begin{equation}\label{eq:2norm_threholding}
    \|A_r-\widetilde A_r\|\leq 2\|\Delta A\|+2\sigma_{r+1}\min\left\{\frac{2\|\Delta A\|}{\sigma_r-\sigma_{r+1}},1\right\},
\end{equation}
\begin{align}\label{eq:Fnorm_threholding}
    \|A_r-\widetilde A_r\|_F&\leq \left(2\|(\Delta A)_r\|_F^2+3\left(\|(\Delta A)_r\|_F+\|(\Sigma_2)_r\|_F\min\left\{\frac{2\|\Delta A\|}{\sigma_r-\sigma_{r+1}},1\right\}\right)^2\right)^{1/2}.
\end{align}
\end{theorem}
This error bound has exactly the same form as the PCA perturbation bound established in the previous section, except that here $A_r$ and $\widetilde A_r$ do not differ by a rotation. Intuitively, this indicates that the noise-induced rotation on $\widetilde U_1$ and that on $\widetilde V_1^T$ can essentially cancel with each other.


\begin{remark}
When $A$ is a rank-$r$ matrix, the bound in Theorem \ref{thm:thresh} reduces to the result in \cite{luo2021schatten}:
\begin{equation}
\begin{cases} \|A_r-\widetilde{A}_r\|\leq 2\|\Delta A\|,\\
 \|A_r-\widetilde{A}_r\|_F\leq \sqrt{5}\|(\Delta A)_r\|_F.
\end{cases}
\end{equation} 
\end{remark}
\section{Closed-form expression of $\sin\Theta$ distance between two singular spaces}\label{sec:3}
The several new results presented in the previous section are derived either directly or indirectly from a set of sin$\Theta$ formulae we shall establish in this section. In other words, these sin$\Theta$ formulae serve as useful tools to analyze SVD based perturbation problems. 
\subsection{First order equivalent expressions of the $\sin\Theta$ distance}

Following the same notation as in Section \ref{sec:2.1}, our goal is to compute the exact expressions of perturbation angles of the leading left singular subspace $U_1$ under noise $\Delta A$.

For two matrices $U_1,\widetilde U_1 \in \mathbb{R}^{n\times r}$ with orthonormal columns, let the singular values of $U_1^T\widetilde U_1$ be $\gamma_1\geq\gamma_2\geq...\geq \gamma_r\geq 0$, then $\cos^{-1}{\gamma_i}$, $i=1,...,r$ are the principal angles, and the $\sin\Theta$ matrix is the following diagonal matrix 
\[
\sin \Theta(U_1,\widetilde U_1)=\text{diag}\{\sin \cos^{-1}(\gamma_1),\sin \cos^{-1}(\gamma_2),...,\sin\cos^{-1}(\gamma_r)\}.
\]
 The angles are usually measured  under either the spectral norm $\|\sin\Theta(U_1,\widetilde U_1)\|$ or the Frobenius norm $\|\sin\Theta(U_1,\widetilde U_1)\|_F$. It is well known that (e.g., \cite{cai2018rate,knyazev2002principal})
\begin{equation}\label{eq:sinform}
\|\sin\Theta(U_1,\widetilde U_1)\|=\|U_2^T\widetilde U_1\|=\|\widetilde U_2^T U_1\|,
\end{equation}
\begin{equation}\label{eq:sinform1}
\|\sin\Theta(U_1,\widetilde U_1)\|_F=\|U_2^T\widetilde U_1\|_F=\|\widetilde U_2^T U_1\|_F,
\end{equation}
where $U_2$ is the orthogonal complement of $U_1$ as defined in \eqref{eq:conformal}.

\eqref{eq:sinform} and \eqref{eq:sinform1} indicate that the matrices $U_2^T\widetilde U_1$ and $\widetilde U_2^T U_1$ are  key intermediate quantities to bound the sin$\Theta$ angles. In the following theorem, we provide useful expressions of these key quantities.

\begin{theorem}[Angular perturbation formula]\label{thm:main}
Let $A$, $\widetilde{A}=A+\Delta A$ be two $n\times m$ matrices and their conformal SVDs are defined as \eqref{eq:conformal}. The rank of $A$ is at least $r$. Assume there is a gap between the $r$th and the $(r+1)$th singular values, i.e., $\sigma_r-\widetilde\sigma_{r+1}>0$ and  $\widetilde\sigma_r-\sigma_{r+1}>0$.  Then the following expressions hold:
\begin{equation}
\begin{aligned}\label{eq:main}
    U_1^T\widetilde U_2&=F_U^{12}\circ(U_1^T(\Delta A)\widetilde V_2\widetilde\Sigma_2^T+\Sigma_1 V_1^T(\Delta A)^T\widetilde U_2),\\
    U_2^T\widetilde U_1&=F_U^{21}\circ(U_2^T(\Delta A)\widetilde V_1\widetilde\Sigma_1^T+\Sigma_2 V_2^T(\Delta A)^T\widetilde U_1),\\
    V_1^T\widetilde V_2&=F_V^{12}\circ(\Sigma_1^T U_1^T(\Delta A)\widetilde V_2+ V_1^T(\Delta A)^T\widetilde U_2\widetilde\Sigma_2),\\
    V_2^T\widetilde V_1&=F_V^{21}\circ(\Sigma_2^T U_2^T(\Delta A)\widetilde V_1+ V_2^T(\Delta A)^T\widetilde U_1\widetilde\Sigma_1).
\end{aligned}
\end{equation}
More specifically, the assumption $\sigma_r-\widetilde\sigma_{r+1}>0$ is required for the first and the third expressions of \eqref{eq:main} to hold, and $\widetilde\sigma_r-\sigma_{r+1}>0$ is required for the second and the last expressions to hold. Here $\circ$ means the Hadamard product, or element-wise product between two matrices. $F_U^{12}\in\mathbb{R}^{r\times (n-r)}$ has entries $(F_U^{12})_{i,j}=\frac{1}{\widetilde\sigma_{j+r}^2-\sigma_i^2},\ 1\leq i\leq r,\ 1\leq j\leq n-r$; $F_U^{21}\in\mathbb{R}^{(n-r)\times r}$ has entries $(F_U^{21})_{i,j}=\frac{1}{\widetilde\sigma_j^2-\sigma_{i+r}^2},\ 1\leq i\leq n-r,\ 1\leq j\leq r$. Similarly, $F_V^{12}\in\mathbb{R}^{r\times (m-r)}$ has entries $(F_V^{12})_{i,j}=\frac{1}{\widetilde\sigma_{j+r}^2-\sigma_i^2},\ 1\leq i\leq r,\ 1\leq j\leq m-r$; and $F_V^{21}\in\mathbb{R}^{(m-r)\times r}$ with entries $(F_V^{21})_{i,j}=\frac{1}{\widetilde\sigma_j^2-\sigma_{i+r}^2},\ 1\leq i\leq m-r,\ 1\leq j\leq r$. Here if $i>\min\{n,m\}$, we enforce $\sigma_i$ and $\widetilde\sigma_i$ to be 0.
\end{theorem}

Taking the spectral norm on both hand sides of \eqref{eq:main} gives us the following new expression of the sin$\Theta$ distance.
\begin{corollary}\label{col:main}
If the condition in Theorem \ref{thm:main} is satisfied, then the $\sin\Theta$ distances between the $r$ leading singular spaces of the original and the perturbed matrices satisfy
\begin{align*}
\|\sin\Theta(U_1,\widetilde U_1)\|&=\|F_U^{12}\circ(U_1^T(\Delta A)\widetilde V_2\widetilde\Sigma_2^T+\Sigma_1 V_1^T(\Delta A)^T\widetilde U_2) \|\\ &=\|F_U^{21}\circ(U_2^T(\Delta A)\widetilde V_1\widetilde\Sigma_1^T+\Sigma_2 V_2^T(\Delta A)^T\widetilde U_1)\|,\\
\|\sin\Theta(V_1,\widetilde V_1)\|&=\|F_V^{12}\circ(\Sigma_1^T U_1^T(\Delta A)\widetilde V_2+ V_1^T(\Delta A)^T\widetilde U_2\widetilde\Sigma_2) \|\\&=\|F_V^{21}\circ(\Sigma_2^T U_2^T(\Delta A)\widetilde V_1+ V_2^T(\Delta A)^T\widetilde U_1\widetilde\Sigma_1)\|.
\end{align*}
\end{corollary}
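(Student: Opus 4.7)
The plan is to obtain Corollary~\ref{col:main} as a direct consequence of Theorem~\ref{thm:main} combined with the norm identity~\eqref{eq:sin_norm}. That identity already tells us that
\[
\|\sin\Theta(U_1,\widetilde U_1)\|=\|U_1^T\widetilde U_2\|=\|U_2^T\widetilde U_1\|,
\]
and an analogous chain of equalities holds for the $V$-side pair $V_1,\widetilde V_1$. Consequently, the only remaining task is to substitute the closed-form expressions for the four cross-term products $U_1^T\widetilde U_2$, $U_2^T\widetilde U_1$, $V_1^T\widetilde V_2$, $V_2^T\widetilde V_1$ supplied by \eqref{eq:main}, and to take spectral norms on both sides of each resulting equality.

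Concretely, I would first apply the spectral norm to each of the four identities in \eqref{eq:main}. By~\eqref{eq:sin_norm} (and its $V$-side analogue), the left-hand sides collapse to $\|\sin\Theta(U_1,\widetilde U_1)\|$ for the first two lines and to $\|\sin\Theta(V_1,\widetilde V_1)\|$ for the last two, while the right-hand sides are already in the Hadamard-product form asserted by Corollary~\ref{col:main}. Matching them line by line yields the two alternative formulas for $\|\sin\Theta(U_1,\widetilde U_1)\|$ and the two alternative formulas for $\|\sin\Theta(V_1,\widetilde V_1)\|$. The singular-gap hypotheses $\sigma_r-\widetilde\sigma_{r+1}>0$ and $\widetilde\sigma_r-\sigma_{r+1}>0$ are inherited unchanged from Theorem~\ref{thm:main} and guarantee that every entry of $F_U^{12}$, $F_U^{21}$, $F_V^{12}$, $F_V^{21}$ is well-defined.

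There is essentially no obstacle in this step: the corollary is nothing more than a reformulation of Theorem~\ref{thm:main} in the language of principal angles, and all the substantive work lies in proving the theorem. The only care needed is to pair each identity in~\eqref{eq:main} with the \emph{correct} orientation of the norm identity in~\eqref{eq:sin_norm}, so that the first formula of the corollary (which involves $F_U^{12}$) comes from $\|U_1^T\widetilde U_2\|$ and the second (which involves $F_U^{21}$) comes from $\|U_2^T\widetilde U_1\|$; the analogous pairing applies on the $V$-side with $F_V^{12}$ and $F_V^{21}$.
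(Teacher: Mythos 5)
Your proposal is correct and matches the paper's (one-line) proof exactly: the paper simply says ``Taking the spectral norm on both hand sides of \eqref{eq:main} gives us the following new expression,'' and you have spelled out the same step with the needed bookkeeping via the norm identity \eqref{eq:sin_norm}. Nothing is missing.
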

\begin{remark} 
In the expressions of corollary \ref{col:main},  the singular value gaps are contained in the terms $F_U^{21}$, $F_U^{12}$ $F_V^{21}$, $F_V^{12}$ as denominators. In this sense, \eqref{eq:main} conveys the same insight as Wedin's sin$\Theta$ theorem.  
\end{remark}
Everything else in the right hand sides of Corollary \ref{col:main} is straightforward to bound except perhaps for the Hadamard products. The following lemma shows that the Hadamard product is also relatively easy to treat.

\begin{lemma}\label{lemma:Hnorm}
Assume $\sigma_r-\widetilde\sigma_{r+1}>0,\ \widetilde\sigma_r-\sigma_{r+1}>0$, let $F_U^{12},\ F_U^{21}$, $\Sigma_1,\ \widetilde\Sigma_1$, $\Sigma_2,\ \widetilde\Sigma_2$, be the same as in Theorem \ref{thm:main} and let $H_1\in\mathbb{R}^{(n-r)\times r},\ H_2\in\mathbb{R}^{(m-r)\times r},\ H_3\in\mathbb{R}^{r\times (m-r)},\ H_4\in\mathbb{R}^{r\times (n-r)}$ be some arbitrary matrices. Then
\begin{equation}\label{eq:Hnorm_ineq}
|||F_U^{21}\circ (H_1\widetilde\Sigma_1) ||| \leq\frac{\widetilde\sigma_r}{\widetilde\sigma_r^2-\sigma_{r+1}^2}|||H_1|||,\ ||| F_U^{21}\circ(\Sigma_2 H_2)||| \leq\frac{\sigma_{r+1}}{\widetilde\sigma_r^2-\sigma_{r+1}^2}|||H_2|||,
\end{equation}
\begin{equation}\label{eq:Hnorm_ineq2}
|||F_U^{12}\circ(H_3\widetilde\Sigma_2^T)|||\leq\frac{\widetilde\sigma_{r+1}}{\sigma_r^2-\widetilde\sigma_{r+1}^2}|||H_3|||,\ |||F_U^{12}\circ(\Sigma_1 H_4)|||\leq\frac{\sigma_{r}}{\sigma_r^2-\widetilde\sigma_{r+1}^2}|||H_4|||,
\end{equation}
where $|||\cdot |||$ can be either the spectral or the Frobenius norm. Similar results also hold for $F_V^{12}$ and $F_V^{21}$.
\end{lemma}
\subsection{Examples in using Theorem \ref{thm:main}}
We demonstrate how to use Theorem \ref{thm:main} to simplify proofs of some existing perturbation bounds in the literature. The theorem we use to derive all the new results in this paper is in the next section (Theorem \ref{thm:main2}). Curious readers may safely jump to the next section from here.  \\
\textbf{Example 1:} The angular perturbation formulae in Theorem \ref{thm:main} naturally yield the one-sided sin$\Theta$ bounds first discovered in \cite{cai2018rate}. Theorem \ref{thm:main} now introduces a very straightforward derivation of these bounds.
\begin{theorem}[One-sided sin$\Theta$ theorem]\label{thm:two-sided} Using the same notation and quantities as in Theorem \ref{thm:main}, if $\sigma_r-\widetilde\sigma_{r+1}>0,\ \widetilde\sigma_r-\sigma_{r+1}>0$, then
\small
\begin{align}
\|\sin\Theta(U_1,\widetilde U_1)\|& \leq \min\left\{\frac{\widetilde\sigma_r\|(\Delta A)\widetilde V_1\|}{\widetilde\sigma_r^2-\sigma_{r+1}^2}+ \frac{\sigma_{r+1}\|(\Delta A)^T\widetilde U_1\|}{\widetilde\sigma_r^2-\sigma_{r+1}^2},\frac{\sigma_r\|(\Delta A)V_1\|}{\sigma_r^2-\widetilde\sigma_{r+1}^2}+ \frac{\widetilde\sigma_{r+1}\|U_1^T(\Delta A) \|}{\sigma_r^2-\widetilde\sigma_{r+1}^2}\right\}, \label{eq:onesideU}\end{align}
\begin{equation}
\|\sin\Theta(V_1,\widetilde V_1)\|\leq \min\left\{\frac{\widetilde\sigma_r\|\widetilde U_1^T(\Delta A)\|}{\widetilde\sigma_r^2-\sigma_{r+1}^2}+ \frac{\sigma_{r+1}\|(\Delta A)\widetilde V_1\|}{\widetilde\sigma_r^2-\sigma_{r+1}^2},\frac{\sigma_r\|U_1^T(\Delta A)\|}{\sigma_r^2-\widetilde\sigma_{r+1}^2}+ \frac{\widetilde\sigma_{r+1}\|(\Delta A)V_1 \|}{\sigma_r^2-\widetilde\sigma_{r+1}^2}\right\}.\label{eq:onesideV}
\normalfont
\end{equation}
Moreover,
\begin{equation}
\max\{\|\sin\Theta(U_1,\widetilde U_1)\|,\|\sin\Theta(V_1,\widetilde V_1)\|\} \leq \min\left\{\frac{1}{\sigma_r-\widetilde\sigma_{r+1}},\frac{1}{\widetilde\sigma_r-\sigma_{r+1}}\right\}\|\Delta A\|.
\label{eq:uniform_hat}
\end{equation}
\end{theorem}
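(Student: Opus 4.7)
The plan is to apply Corollary \ref{col:main} (which is a direct consequence of Theorem \ref{thm:main}) combined with Lemma \ref{lemma:Hnorm} to the right-hand sides of the sin$\Theta$ identities. Corollary \ref{col:main} already expresses $\|\sin\Theta(U_1,\widetilde U_1)\|$ in two equivalent ways: one involving $F_U^{21}$ and singular-value/noise blocks on the ``$r{+}1$ through $n$'' indices, and one involving $F_U^{12}$ on the ``$1$ through $r$'' indices. Each of the two minima in \eqref{eq:onesideU} will come from one of these two expressions. The analogous story holds for $V$.

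First I would start from the identity
\[
\|\sin\Theta(U_1,\widetilde U_1)\| = \|F_U^{21}\circ\bigl(U_2^T(\Delta A)\widetilde V_1\widetilde\Sigma_1^T+\Sigma_2 V_2^T(\Delta A)^T\widetilde U_1\bigr)\|,
\]
apply the triangle inequality to split the two summands inside the Hadamard product, and then invoke Lemma \ref{lemma:Hnorm} with $H_1=U_2^T(\Delta A)\widetilde V_1$ and $H_2=V_2^T(\Delta A)^T\widetilde U_1$. This yields
\[
\|\sin\Theta(U_1,\widetilde U_1)\|\le \frac{\widetilde\sigma_r\|U_2^T(\Delta A)\widetilde V_1\|}{\widetilde\sigma_r^2-\sigma_{r+1}^2}+\frac{\sigma_{r+1}\|V_2^T(\Delta A)^T\widetilde U_1\|}{\widetilde\sigma_r^2-\sigma_{r+1}^2},
\]
and since $U_2,V_2$ are submatrices of orthogonal matrices, $\|U_2^T(\Delta A)\widetilde V_1\|\le\|(\Delta A)\widetilde V_1\|$ and $\|V_2^T(\Delta A)^T\widetilde U_1\|\le\|(\Delta A)^T\widetilde U_1\|$. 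That delivers the first element of the minimum in \eqref{eq:onesideU}. The second element comes from the other expression in Corollary \ref{col:main}, namely $\|\sin\Theta(U_1,\widetilde U_1)\|=\|F_U^{12}\circ(U_1^T(\Delta A)\widetilde V_2\widetilde\Sigma_2^T+\Sigma_1 V_1^T(\Delta A)^T\widetilde U_2)\|$, handled identically using the $B_3,B_4$ bounds of Lemma \ref{lemma:Hnorm}. Swapping roles of left and right throughout produces \eqref{eq:onesideV}.

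For the uniform bound \eqref{eq:uniform_hat}, I would simply bound all of $\|(\Delta A)\widetilde V_1\|$, $\|(\Delta A)^T\widetilde U_1\|$, $\|(\Delta A)V_1\|$, $\|U_1^T(\Delta A)\|$ by $\|\Delta A\|$. Then the first argument of the min in \eqref{eq:onesideU} collapses via the factorization $\widetilde\sigma_r^2-\sigma_{r+1}^2=(\widetilde\sigma_r-\sigma_{r+1})(\widetilde\sigma_r+\sigma_{r+1})$ into $\|\Delta A\|/(\widetilde\sigma_r-\sigma_{r+1})$, and the second argument collapses analogously into $\|\Delta A\|/(\sigma_r-\widetilde\sigma_{r+1})$. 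The same reasoning applies to $V_1$, and taking the max over $U_1,V_1$ yields \eqref{eq:uniform_hat}.

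I do not expect any real obstacle: each step is a mechanical application of the triangle inequality, sub-multiplicativity of the spectral norm under orthogonal projection, and Lemma \ref{lemma:Hnorm}. The only point meriting care is the bookkeeping of which of the four Hadamard-type bounds $B_1,B_2,B_3,B_4$ corresponds to which term, and making sure that the assumptions $\sigma_r-\widetilde\sigma_{r+1}>0$ and $\widetilde\sigma_r-\sigma_{r+1}>0$ are each used only where needed (the former for the $F_U^{12}$/$F_V^{12}$ expressions, the latter for the $F_U^{21}$/$F_V^{21}$ expressions), so that both halves of the min are genuinely available under the theorem's hypotheses.
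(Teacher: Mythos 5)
Your proposal is correct and follows essentially the same route as the paper's own proof: starting from the two equivalent expressions in Theorem~\ref{thm:main}/Corollary~\ref{col:main}, splitting by the triangle inequality, applying the $B_1,B_2$ (respectively $B_3,B_4$) bounds of Lemma~\ref{lemma:Hnorm}, dropping the inner orthogonal projections, and then taking the minimum over the two resulting bounds; the uniform bound follows by replacing each noise term with $\|\Delta A\|$ and factoring the difference of squares. Your bookkeeping of which hypothesis supports which half of the min also matches the paper.
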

\eqref{eq:onesideU} and \eqref{eq:onesideV} are individual bounds on $\|\sin\Theta(U_1,\widetilde U_1)\|$ and $\|\sin\Theta(V_1,\widetilde V_1)\|$, while the classical Wedin's sin$\Theta$ theorem is a uniform bound on both $\|\sin\Theta(U_1,\widetilde U_1)\|$ and $\|\sin\Theta(V_1,\widetilde V_1)\|$. 
The benefit of obtaining the individual bounds was clearly pointed out in  \cite{cai2018rate} by an example. When $A\in \mathbb{R}^{n\times m}$ is a fixed rank-$r$
matrix with $r < n \ll m$, and $\Delta A\in \mathbb{R}^{n\times m}$ is a small random matrix with i.i.d. standard normal entries.  The Wedin's theorem implies 
\begin{equation}\label{eq:uniform_ori}
\max\{\|\sin\Theta(U_1,\widetilde U_1)\|,\|\sin\Theta(V_1,\widetilde V_1)\|\}\leq\frac{C\max\{\sqrt{n}, \sqrt{m}\}}{\sigma_r},
\end{equation}
while the one-sided bounds approximately give,
\begin{equation}\label{eq:two-side}
\|\sin\Theta(U_1,\widetilde U_1)\| \leq \frac{C\sqrt{n}}{\sigma_r} , \quad \|\sin\Theta(V_1,\widetilde V_1)\|\}\leq\frac{C\sqrt{m} }{\sigma_r}.
\end{equation}
Since we assumed $n\ll m$, only the one-sided bound successfully indicated that $U_1$ is more stable than $V_1$. 

 The proof of Theorem \ref{thm:two-sided} is a simple application of Theorem \ref{thm:main}.
\begin{proof} From Theorem \ref{thm:main} we have 
\[
U_2^T\widetilde U_1=F_U^{21}\circ(U_2^T(\Delta A)\widetilde V_1\widetilde\Sigma_1^T+\Sigma_2 V_2^T(\Delta A)^T\widetilde U_1),
\]
\[
 U_1^T\widetilde U_2=F_U^{12}\circ(U_1^T(\Delta A)\widetilde V_2\widetilde\Sigma_2^T+\Sigma_1 V_1^T(\Delta A)^T\widetilde U_2).
\]
By \eqref{eq:Hnorm_ineq} in Lemma \ref{lemma:Hnorm}, 
\begin{align}
    \|U_2^T\widetilde U _1\|&\leq\frac{\widetilde\sigma_r}{\widetilde\sigma_r^2-\sigma_{r+1}^2}\|U_2^T(\Delta A)\widetilde V_1\|+ \frac{\sigma_{r+1}}{\widetilde\sigma_r^2-\sigma_{r+1}^2}\|V_2^T(\Delta A)^T\widetilde U_1\|\notag\\
   & \leq\frac{\widetilde\sigma_r}{\widetilde\sigma_r^2-\sigma_{r+1}^2}\|(\Delta A)\widetilde V_1\|+ \frac{\sigma_{r+1}}{\widetilde\sigma_r^2-\sigma_{r+1}^2}\|(\Delta A)^T\widetilde U_1\| \label{eq:tight_1side}\\
   & \leq\frac{\|\Delta A\|}{\widetilde{\sigma}_r-\sigma_{r+1}} \label{eq:relax_1side}.
\end{align}
Similarly,
\begin{align}
\|U_1^T\widetilde U_2\|& \leq  \frac{\widetilde \sigma_{r+1}}{\sigma_r^2-\widetilde\sigma_{r+1}^2}\|U_1^T\Delta A\|+ \frac{\sigma_{r}}{\sigma_r^2-\widetilde\sigma_{r+1}^2}\|(\Delta A)V_1 \| \label{eq:U1U2}\\
&  \leq \frac{\|\Delta A\|}{\sigma_r-\widetilde\sigma_{r+1}} \label{eq:relax_1side2}.
\end{align}
Inserting \eqref{eq:tight_1side} and \eqref{eq:U1U2} into $\|\sin\Theta(U_1,\widetilde U_1)\| = \min\{\|U_1^T\widetilde U_2\|,\|U_2^T\widetilde U _1\| \}$, we obtain \eqref{eq:onesideU}. Similarly, \eqref{eq:onesideV} also holds. \eqref{eq:uniform_hat} is obtained by using \eqref{eq:relax_1side} and \eqref{eq:relax_1side2}. 
\end{proof}

\noindent\textbf{Example 2:} In this example, we show that one may obtain some interesting results when applying Theorem \ref{thm:main} to some less usual choices of $\Delta A$. 

Explicitly, we use Theorem \ref{thm:main} to re-derive a useful result in \cite{cai2018rate} but with a more straightforward proof. The result, copied in Proposition \ref{pro:1}, is about the sin$\Theta$ distance between the leading singular subspace of a matrix $A$ and an arbitrary subspace.


\begin{proposition}[Proposition 1 in \cite{cai2018rate}]\label{pro:1}
Suppose $A\in\mathbb{R}^{n\times m}$. The orthonormal matrix $V=[V_1,V_2]\in\mathbb{R}^{m\times m}$ is the matrix of right singular vectors of $A$, i.e., $V_1\in\mathbb{R}^{m\times r},\ V_2\in\mathbb{R}^{m\times (m-r)}$ correspond to the first $r$ and last $m-r$ singular vectors respectively. $[W_1,W_2]\in\mathbb{R}^{m\times m}$ is any orthonormal matrix with $W_1\in\mathbb{R}^{m\times r},\ W_2\in\mathbb{R}^{m\times (m-r)}$. Given that $\sigma_r(AW_1)>\sigma_{r+1}(A)$, we have
\begin{equation}\label{eq:prop1a}
    \|\sin\Theta(V_1,W_1)\|\leq\min\left\{\frac{\sigma_r(AW_1)\|\mathbb{P}_{(AW_1)}AW_2\|}{\sigma_r^2(AW_1)-\sigma_{r+1}^2(A)}, 1\right\}.
\end{equation}
\begin{equation}\label{eq:prop1b}
    \|\sin\Theta(V_1,W_1)\|_F\leq\min\left\{\frac{\sigma_r(AW_1)\|\mathbb{P}_{(AW_1)}AW_2\|_F}{\sigma_r^2(AW_1)-\sigma_{r+1}^2(A)}, \sqrt{r}\right\}.
\end{equation}
\end{proposition}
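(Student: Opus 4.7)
The plan is to reduce the proposition to a single application of Theorem~\ref{thm:main} by constructing an auxiliary ``unperturbed'' matrix whose top-$r$ right singular subspace is exactly $\mathrm{span}(W)$. Set $B \coloneqq A W W^T$, and obtain its SVD from a thin SVD $AW = P D R^T$ with $P \in \mathbb{R}^{n,r}$ having orthonormal columns, $D = \mathrm{diag}(\sigma_1(AW),\ldots,\sigma_r(AW))$, and $R \in \mathbb{O}_r$. Then $B = P D (WR)^T$ is a thin SVD of $B$: its top-$r$ right singular vectors are the columns of $WR$ (spanning the same subspace as $W$), its top-$r$ left singular vectors are $P$, its top-$r$ singular values are the $\sigma_i(AW)$, and all remaining singular values vanish. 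The hypothesis $\sigma_r(AW) > \sigma_{r+1}(A)$ of the proposition is exactly the gap condition $\sigma_r - \widetilde\sigma_{r+1} > 0$ needed to invoke the third expression of \eqref{eq:main}.

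Applying Theorem~\ref{thm:main} with $B$ as the original matrix and $A$ as the perturbed matrix, so that $\Delta A = A - B = A W_\perp W_\perp^T$, the third formula of \eqref{eq:main} becomes
\[
(WR)^T V_2 \;=\; F_V^{12} \circ \Big( D\, P^T (A W_\perp W_\perp^T) V_2 \;+\; (WR)^T (A W_\perp W_\perp^T)^T \widetilde U_2\, \widetilde \Sigma_2 \Big),
\]
where $V_2, \widetilde U_2, \widetilde \Sigma_2$ are the bottom right/left singular vectors and tail singular values of $A$ itself. The crucial cancellation is $(WR)^T W_\perp = R^T W^T W_\perp = 0$, which annihilates the entire second summand. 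We are left with $(WR)^T V_2 = F_V^{12} \circ (D\, P^T A W_\perp W_\perp^T V_2)$.

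From here everything is routine. The $F_V^{12}$ analog of Lemma~\ref{lemma:Hnorm} (whose proof is verbatim identical to the $F_U^{12}$ case, swapping the roles of $U$ and $V$) applied with $H_4 = P^T A W_\perp W_\perp^T V_2$ yields, for $p \in \{2, \infty\}$,
\[
\|(WR)^T V_2\|_p \;\leq\; \frac{\sigma_r(AW)}{\sigma_r^2(AW) - \sigma_{r+1}^2(A)}\; \|P^T A W_\perp W_\perp^T V_2\|_p.
\]
Because $W_\perp^T V_2$ has spectral norm at most one, it may be dropped, giving $\|P^T A W_\perp W_\perp^T V_2\|_p \leq \|P^T A W_\perp\|_p = \|P P^T A W_\perp\|_p = \|\mathbb{P}_{(AW)} A W_\perp\|_p$. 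Combined with the rotational invariance $\|\sin\Theta(V_1,W)\|_p = \|(WR)^T V_2\|_p$ (valid because $R \in \mathbb{O}_r$ preserves the Schatten $p$-norm and $W$, $WR$ span the same subspace, via \eqref{eq:sin_norm}), this recovers the non-trivial halves of both \eqref{eq:prop1a} and \eqref{eq:prop1b}. The alternative bounds $1$ and $\sqrt{r}$ inside the minima are the universal upper bounds on $\|\sin\Theta\|$ and $\|\sin\Theta\|_F$ between any two $r$-dimensional subspaces.

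No step is truly subtle. The only real idea is the choice $B = A W W^T$, engineered so that the bottom right singular subspace of $B$ is orthogonal to $\mathrm{span}(W)$ and thereby kills one of the two summands in the Hadamard formula. Everything else is mechanical bookkeeping inherited from Theorem~\ref{thm:main} and Lemma~\ref{lemma:Hnorm}.
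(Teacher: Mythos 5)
Your proof is correct and uses essentially the same reduction as the paper: both construct the auxiliary matrix $AWW^T$ and invoke Theorem~\ref{thm:main} with a perturbation $\pm A W_\perp W_\perp^T$ chosen so that the relation $W^T W_\perp = 0$ annihilates one summand in the Hadamard formula, followed by Lemma~\ref{lemma:Hnorm} to expose the singular-value gap. The only difference is cosmetic: the paper takes $A$ as the original matrix and $AWW^T$ as the perturbed one (using the $V_2^T\widetilde V_1$ formula), while you flip the roles (using the $V_1^T\widetilde V_2$ formula); you are also slightly more careful than the paper in tracking the rotation $R$ in the SVD of $AW$ rather than asserting $\widetilde V_1 = W$ outright.
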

In order to use Theorem \ref{thm:main} to prove Proposition \ref{pro:1}, we recognize that Proposition \ref{pro:1} is actually a sin$\Theta$ bound under a special perturbation. Specifically, if we set 
$\Delta A=AW_1 W_1^T-A$, then the quantity $\sin\Theta(V_1,W_1)$  bounded in Proposition \ref{pro:1} is exactly the sin$\Theta$ angle between $A$ and $\widetilde{A}=A+\Delta A$. In addition, this particular choice of $\Delta A$ has small magnitude of norm therefore leading to a small perturbation bound. 
\begin{proof}
Apply Theorem \ref{thm:main} to $A$ and $\widetilde A=AW_1 W_1^T$, which means $\Delta A=\widetilde A -A =AW_1 W_1^T-A=-AW_2 W_2^T$. Assume $U_i, V_i,\Sigma_i,\widetilde{U}_i, \widetilde{V}_i,\widetilde{\Sigma}_i$, $i=1,2$ are from the conformal SVDs \eqref{eq:conformal} of this $A$ and $\widetilde{A}$. Then using the notation in Theorem \ref{thm:main}, we have $(F_V^{21})_{i,j}=\frac{1}{\sigma_j^2(AW_1)-\sigma_{i+r}^2(A)}$,  $\widetilde V_1=W_1,\ \Sigma_2^TU_2^T(\Delta A)\widetilde V_1=0$. Theorem \ref{thm:main} in this case gives
\begin{align*}
    V_2^T W_1&=F_V^{21}\circ (V_2^T(\Delta A)^T\widetilde U_1\widetilde \Sigma_1).
\end{align*}
By Lemma \ref{lemma:Hnorm}, this implies
\[
|||V_2^T W_1|||\leq\frac{\sigma_r(AW_1)|||\widetilde U_1^TAW_2W_2^TV_2|||}{\sigma_r^2(AW_1)-\sigma_{r+1}^2(A)}\leq\frac{\sigma_r(A W_1)|||P_{AW_1} AW_2|||}{\sigma^2_r(AW_1)-\sigma_{r+1}^2(A)},
\]
where $|||\cdot |||$ can be either the spectral of Frobenius norm.
Also, we directly have $\|V_2^T W_1\|\leq 1$ and $\|V_2^T W_1\|_F\leq \sqrt{r}$, thus \eqref{eq:prop1a} and \eqref{eq:prop1b} hold.
\end{proof}

\subsection{High order sin$\Theta$ distance formulae using series expansions}\label{sec:3.3}
Although the formulae in Theorem \ref{thm:main} are already quite useful, they are still only first-order formulae in the following sense. Looking at the first formula in \eqref{eq:main} of Theorem \ref{thm:main}, a closer examination shows that the unknown left hand side  $U_1^T\widetilde{U}_2$ also appears implicitly in the right hand side, albeit as high order terms. Since we consider upper bounds in the non-asymptotic regime, high order errors may sometimes affect the tightness of the bound, so we hope to get rid of them. 

To be more specific about the implicit appearances of the high order terms, we denote the left hand sides of the four formulae in Theorem \ref{thm:main} as $X,Y,W,Z$
\[
X \coloneqq  U_1^T\widetilde{U}_2,  \quad Y\coloneqq  U_2^T \widetilde{U}_1,\quad  W\coloneqq V_1^T\widetilde{V}_2, \quad Z\coloneqq V_2^T \widetilde{V}_1.
\]
First focus on the expression of $Y$ in Theorem \ref{thm:main} 
\begin{align}
Y&\equiv U_2^T\widetilde U_1=F_U^{21}\circ(U_2^T(\Delta A)\widetilde V_1\widetilde\Sigma_1^T+\Sigma_2 V_2^T(\Delta A)^T\widetilde U_1)\notag\\
&=F_U^{21}\circ(\Sigma_2V_2^T(\Delta A)^T U_1U_1^T\widetilde U_1+\Sigma_2 V_2^T(\Delta A)^T U_2U_2^T\widetilde U_1+U_2^T(\Delta A) V_1 V_1^T\widetilde V_1\widetilde\Sigma_1^T\notag\\
&\quad+U_2^T(\Delta A) V_2V_2^T\widetilde V_1\widetilde\Sigma_1^T)\notag\\
&=\underbrace{F_U^{21}\circ(\Sigma_2\alpha_{12}^T U_1^T\widetilde U_1+\alpha_{21}V_1^T\widetilde V_1\widetilde\Sigma_1^T)}_{\coloneqq C_1}+F_U^{21}\circ(\Sigma_2\alpha_{22}^TY)+F_U^{21}\circ(\alpha_{22}Z\widetilde\Sigma_1^T),\label{eq:Y}
\end{align}
where the second line used $U_1U_1^T+U_2U_2^T = I$ and $V_1V_1^T+V_2V_2^T = I$, the third line is a re-grouping of terms, and $\alpha_{ij} := U^T_i\Delta AV_j, 1\leq i,j\leq 2$. 
We can get the same expression for $Z$ 
\begin{align}
    Z&\equiv V_2^T\widetilde V_1=F_V^{21}\circ(\Sigma_2^T U_2^T(\Delta A)\widetilde V_1+ V_2^T(\Delta A)^T\widetilde U_1\widetilde\Sigma_1) \notag\\
    &=F_V^{21}\circ(V_2^T(\Delta A)^T U_1U_1^T\widetilde U_1\widetilde\Sigma_1+ V_2^T(\Delta A)^T U_2U_2^T\widetilde U_1\widetilde\Sigma_1+\Sigma_2^T U_2^T(\Delta A) V_1 V_1^T\widetilde V_1\notag\\
    &\quad+\Sigma_2^T U_2^T(\Delta A) V_2V_2^T\widetilde V_1)\notag\\
    &=\underbrace{F_V^{21}\circ(\alpha_{12}^TU_1^T\widetilde U_1\widetilde\Sigma_1+\Sigma_2^T\alpha_{21}V_1^T\widetilde V_1)}_{\coloneqq C_2}+F_V^{21}\circ(\alpha_{22}^TY\widetilde\Sigma_1)+F_V^{21}\circ(\Sigma_2^T\alpha_{22}Z). \label{eq:Z}
\end{align}
Looking at the last right hand sides of \eqref{eq:Y} and \eqref{eq:Z}, we see that  $Y$ and $Z$ are contained in the second and third terms, respectively, so they appear on both hand sides.

To highlight this structure, we shorten the notation by letting $\mathcal{F}$ be the linear operator defined as
\[
\mathcal{F}\left(\left[\begin{matrix}Y \\ Z\end{matrix}\right]\right) = \left(\begin{matrix}F_U^{21}\circ(\Sigma_2\alpha_{22}^TY)+F_U^{21}\circ(\alpha_{22}Z\widetilde\Sigma_1^T)\\F_V^{21}\circ(\alpha_{22}^TY\widetilde\Sigma_1)+F_V^{21}\circ(\Sigma_2^T\alpha_{22}Z)\end{matrix}\right).
\]
Then \eqref{eq:Y} and \eqref{eq:Z} become,
\[
\left[\begin{matrix}Y \\ Z\end{matrix}\right] = \left(\begin{matrix}C_1\\C_2\end{matrix}\right)+\mathcal{F}\left(\left[\begin{matrix}Y \\ Z\end{matrix}\right]\right) .
\]
Clearly, this is an implicit equation system of $Y$ and $Z$.

Provided $\|\mathcal{F}\|<1$, we can move $\mathcal{F}$ to the left and take the inverse    
\[
\left(\begin{matrix}U_2^T\widetilde{U}_1\\V_2^T\widetilde{V}_1\end{matrix}\right) \equiv \left[\begin{matrix}Y \\ Z\end{matrix}\right] = (1-\mathcal{F})^{-1}\left(\begin{matrix}C_1\\C_2\end{matrix}\right) = \sum_{k=0}^{\infty} \mathcal{F}^k \left(\begin{matrix}C_1\\C_2\end{matrix}\right).
\]
This gives us a series expression of the quantities ${U}_2^T\widetilde{U}_1$ and $V_2^T\widetilde{V}_1$, which allows us to derive Theorem \ref{thm:2toinf_full} and Theorem \ref{thm:single} presented in Section \ref{sec:2}. We summarize this result in the following theorem.

\begin{theorem}[Angular perturbation formula using series expansion]\label{thm:main2}
Using the same notation and quantities as in Theorem \ref{thm:main}, we have 
\begin{equation}\label{eq:implicit}
\left(\begin{matrix}U_2^T\widetilde{U}_1\\V_2^T\widetilde{V}_1\end{matrix}\right)= \left(\begin{matrix}C_1\\C_2\end{matrix}\right)+\mathcal{F}\left(\left[\begin{matrix}U_2^T\widetilde{U}_1\\V_2^T\widetilde{V}_1\end{matrix}\right]\right),  \quad \left(\begin{matrix}U_1^T\widetilde{U}_2\\V_1^T\widetilde{V}_2\end{matrix}\right)= \left(\begin{matrix}C_3\\C_4\end{matrix}\right)+\mathcal{G}\left(\left[\begin{matrix}U_1^T\widetilde{U}_2\\V_1^T\widetilde{V}_2\end{matrix}\right]\right) .
\end{equation}
In addition, provided that $\|\mathcal{F}\|<1$ and $\|\mathcal{G}\|<1$, we have
 \begin{equation}\label{eq:type2}
\left(\begin{matrix}U_2^T\widetilde{U}_1\\V_2^T\widetilde{V}_1\end{matrix}\right) = \sum_{k=0}^{\infty} \mathcal{F}^k \left(\begin{matrix}C_1\\C_2\end{matrix}\right), \quad  \left(\begin{matrix}U_1^T\widetilde{U}_2\\V_1^T\widetilde{V}_2\end{matrix}\right) = \sum_{k=0}^{\infty} \mathcal{G}^k \left(\begin{matrix}C_3\\C_4\end{matrix}\right).
\end{equation}
 Here
\[
\mathcal{F}\left(\begin{matrix}C_1\\C_2\end{matrix}\right) = \left(\begin{matrix}F_U^{21}\circ(\Sigma_2\alpha_{22}^TC_1)+F_U^{21}\circ(\alpha_{22}C_2\widetilde\Sigma_1^T)\\F_V^{21}\circ(\alpha_{22}^TC_1\widetilde\Sigma_1)+F_V^{21}\circ(\Sigma_2^T\alpha_{22}C_2)\end{matrix}\right),\]
\[ \mathcal{G}\left(\begin{matrix}C_3\\C_4\end{matrix}\right) = \left(\begin{matrix}F_U^{12}\circ(\alpha_{11}C_4\widetilde\Sigma_2^T)+F_U^{12}\circ(\Sigma_1\alpha_{11}^T C_3)\\F_V^{12}\circ(\Sigma_1^T\alpha_{11}C_4)+F_V^{12}\circ(\alpha_{11}^T C_3\widetilde\Sigma_2)\end{matrix}\right).
\]
\begin{align*}
C_1&=F_U^{21}\circ(\Sigma_2\alpha_{12}^TU_1^T\widetilde U_1+\alpha_{21}V_1^T\widetilde V_1\widetilde\Sigma_1^T), \quad  
C_2= F_V^{21}\circ(\alpha_{12}^TU_1^T\widetilde U_1\widetilde\Sigma_1+\Sigma_2^T\alpha_{21}V_1^T\widetilde V_1), \\
C_3&=F_U^{12}\circ(\alpha_{12}V_2^T\widetilde V_2\widetilde\Sigma_2^T+\Sigma_1\alpha_{21}^TU_2^T\widetilde U_2) ,\quad
C_4= F_V^{12}\circ(\Sigma_1^T\alpha_{12}V_2^T\widetilde V_2+\alpha_{21}^T U_2^T\widetilde U_2\widetilde \Sigma_2).
\end{align*}
and $\alpha_{ij} := U_i^T\Delta AV_j$. 
\end{theorem}
\begin{remark}
Careful readers may observe that, although we removed all cross terms $U_1^T\widetilde{U}_2$, $U_2^T\widetilde{U}_1$, $V_1^T\widetilde{V}_2$, $V_2^T\widetilde{V}_1$ from the right hand sides of the expressions \eqref{eq:type2}, there are still terms like $U_1^T\widetilde{U}_1$ and $V_1^T\widetilde{V}_1$ appearing on the right hand side. In fact, these terms are of order $O(1)$ thus will not degrade the tightness of the upper bounds by any order of magnitudes and only possibly affect the constants.
\end{remark}
When $A$ has rank $r$, Theorem \ref{thm:main2} reduces to the following simpler formulae. 
\begin{corollary}\label{lemma:Z}
Using the definitions above, when $A$ has rank $r$ and $\|\Delta A\|< \sigma_r(\widetilde A)$, 
\begin{align}\label{eq:cor_low_rank}
\begin{split}
U_2^T\widetilde U_1&=\sum_{k=0}^{+\infty}(\alpha_{22}\alpha_{22}^T)^k(\alpha_{21} V_1^T\widetilde V_1+\alpha_{22}\alpha_{12}^T U_1^T\widetilde U_1\widetilde\Sigma_1^{-1})\widetilde\Sigma_1^{-(2k+1)},\\
V_2^T\widetilde V_1&=\sum_{k=0}^{+\infty}(\alpha_{22}^T\alpha_{22})^k(\alpha_{12}^T U_1^T\widetilde U_1+\alpha_{22}^T\alpha_{21}V_1^T\widetilde V_1\widetilde\Sigma_1^{-1})\widetilde\Sigma_1^{-(2k+1)}.
\end{split}
\end{align}
\end{corollary}
{
\begin{remark}
When $A$ has rank-$r$ and $\alpha_{22}$ is full rank, Corollary \ref{lemma:Z} can also be derived using series expansion for Sylvester-type equations. Denote matrix $M=\begin{pmatrix}
\ & \alpha_{22}\\
\alpha_{22}^T
\end{pmatrix}^{-1}$, $X=\begin{pmatrix}
\ & \alpha_{22}\\
\alpha_{22}^T
\end{pmatrix}\begin{pmatrix}
U_2^T\widetilde U_1\\
V_2^T\widetilde V_1
\end{pmatrix}$, $B=\widetilde\Sigma_1^{-1} $, and $Y=\begin{pmatrix}
\alpha_{21}V_1^T\widetilde V_1 \widetilde\Sigma_1^{-1}\\
\alpha_{12}^TU_1^T\widetilde U_1 \widetilde\Sigma_1^{-1}
\end{pmatrix}$. Direct calculation gives $MX-XB=Y$. By the assumption in Corollary \ref{lemma:Z}, $\|\alpha_{22}\|\leq\|\Delta A\|<\sigma_r(\widetilde A)$, we can see for any eigenvalue $\lambda$ of matrix $M$, it holds that $|\lambda|>\frac{1}{\sigma_r(\widetilde A)}$. Classical series expansion for Sylvester-type equations (Theorem \rom{7}.2.2 in \cite{bhatia2013matrix}) also leads to equation \eqref{eq:cor_low_rank}.
\end{remark}
}
\subsection{Examples of Using Theorem \ref{thm:main2}}\label{sec:3.4}
Theorem \ref{thm:main2} is used to derive the refined $\ell_{2,\infty}$ bound (Theorem \ref{thm:2toinf_full}) and the sin$\Theta$ bound between singular vectors and their resided singular subspace (Theorem \ref{thm:single}), which provided the main intuition behind our PCA and singular value truncation results (Theorem \ref{thm:pca} and Theorem \ref{thm:thresh}) in Section \ref{sec:2}. Here, we only present the proof of Theorem \ref{thm:single}, and defer the proof of the rest to the appendix and the supplementary material since they are more involved.
\begin{proof}
Again we denote $Y\coloneqq  U_2^T \widetilde{U}_1$ and $Z\coloneqq V_2^T \widetilde{V}_1$.  Restricting \eqref{eq:implicit} in Theorem \ref{thm:main2} to the $j$th columns ($1\leq j\leq r$), we have
\[
Y_j = (C_1)_j+\widetilde\sigma_j(F_U^{21})_j\circ(\alpha_{22}Z_j)+(F_U^{21})_j\circ(\Sigma_2\alpha_{22}^T Y_j),
\]
\[
Z_j=(C_2)_j+\widetilde\sigma_j (F_V^{21})_j\circ(\alpha_{22}^T Y_j)+(F_V^{21})_j\circ(\Sigma_2^T\alpha_{22}Z_j).
\]
It is easy to verify that $\|(C_1)_j\|\leq\frac{\widetilde\sigma_j}{\widetilde\sigma_j^2-\sigma_{r+1}^2}\|\alpha_{21}\|+\frac{\sigma_{r+1}}{\widetilde\sigma_j^2-\sigma_{r+1}^2}\|\alpha_{12}\|,\ \|(C_2)_j\|\leq\frac{\sigma_{r+1}}{\widetilde\sigma_j^2-\sigma_{r+1}^2}\|\alpha_{21}\|+\frac{\widetilde\sigma_j}{\widetilde\sigma_j^2-\sigma_{r+1}^2}\|\alpha_{12}\|$, then 
\[
\|Y_j\|\leq \frac{1}{\widetilde\sigma_j^2-\sigma_{r+1}^2} \left( \widetilde\sigma_j\|\alpha_{21}\|+\sigma_{r+1}\|\alpha_{12}\|+\widetilde\sigma_j\|\alpha_{22}\|\|Z_j\|+\sigma_{r+1}\|\alpha_{22}\|\|Y_j\|\right),
\]
\[
\|Z_j\|\leq\frac{1}{\widetilde\sigma_j^2-\sigma_{r+1}^2} \left(\sigma_{r+1}\|\alpha_{21}\|+\widetilde\sigma_j\|\alpha_{12}\|+\widetilde\sigma_j\|\alpha_{22}\|\|Y_j\|+\sigma_{r+1}\|\alpha_{22}\|\|Z_j\|\right).
\]
Summing up the first inequality multiplied by  $\widetilde{\sigma}_j^2-\sigma_{r+1}^2-\sigma_{r+1}\|\alpha_{22}\|$ and the second inequality multiplied by $\widetilde\sigma_j\|\alpha_{22}\|$, after some simplification we get
\begin{align*}
    \|Y_j\|=\|U_2^T\widetilde  u_j\|&\leq\frac{\widetilde\sigma_j\|\alpha_{21}\|+\sigma_{r+1}\|\alpha_{12}\|+\|\alpha_{22}\|\|\alpha_{12}\|}{\widetilde\sigma_j^2-(\sigma_{r+1}+\|\alpha_{22}\|)^2}\\
    &\leq \frac{(\sigma_j-\|\Delta A\|)\|\Delta A\|+\sigma_{r+1}\|\Delta A\|+\|\Delta A\|^2}{(\sigma_j-\|\Delta A\|)^2-(\sigma_{r+1}+\|\Delta A\|)^2}\\
    &\leq\frac{\|\Delta A\|}{\sigma_j-\sigma_{r+1}-2\|\Delta A\|}\\
    &\leq \frac{3\|\Delta A\|}{\sigma_j-\sigma_{r+1}},
\end{align*}
provided that $3\|\Delta A\| \leq  \sigma_r -\sigma_{r+1}$. Here the second inequality is because the upper bound on the right hand side is decreasing with respect to $\widetilde\sigma_j$ and increasing with respect to $\|\alpha_{22}\|$. Similarly, we also have

\[
\|Z_j\|=\|V_2^T\widetilde v_j\|\leq\frac{\widetilde\sigma_j\|\alpha_{12}\|+\sigma_{r+1}\|\alpha_{21}\|+\|\alpha_{22}\|\|\alpha_{21}\|}{\widetilde\sigma_j^2-(\sigma_{r+
1}+\|\alpha_{22}\|)^2} \leq \frac{3\|\Delta A\|}{\sigma_j-\sigma_{r+1}}.
\]
\end{proof}

\section{Proof of the main results}
{In Section \ref{subsec:thm31}, we derive the proof of Theorem \ref{thm:main} and Lemma \ref{lemma:Hnorm}. After that, we present the proof of Theorem \ref{thm:2toinf_full} in Section \ref{sec:two_to_infinity}. 
Since the proof of one key lemma (Lemma \ref{lemma:full_rank}) is long and involved, we divide it into low-rank case and full-rank case. We prove the low-rank case in Section \ref{subsec:lemma44}, and the proof of full-rank case is deferred to appendix. In Section \ref{subsec:thm29} we provide the proof of Theorem \ref{thm:thresh}, while the proof of Theorem \ref{thm:pca} can be found in  Section \ref{subsec:thm27}.}
\subsection{Proof of Theorem \ref{thm:main} and Lemma \ref{lemma:Hnorm}}\label{subsec:thm31}
\begin{proof}[Proof of Theorem \ref{thm:main}]
First, decompose the perturbation $\Delta A$ in the following two ways :
\begin{equation}\label{eq:dA1}
\begin{aligned}
    \Delta A&=\widetilde A-A=\widetilde U\widetilde\Sigma\widetilde V^T-U\Sigma V^T  \\
    &=(U+\Delta U)\widetilde \Sigma\widetilde V^T-U\Sigma(\widetilde V-\Delta V)^T  \\
    &=U\widetilde \Sigma\widetilde V^T+(\Delta U)\widetilde \Sigma\widetilde V^T-U\Sigma \widetilde V^T+U\Sigma(\Delta V)^T \\
    &=U(\Delta \Sigma)\widetilde V^T+(\Delta U)\widetilde \Sigma\widetilde V^T+U\Sigma(\Delta V)^T,   
\end{aligned}
\end{equation}
and
\begin{equation}\label{eq:dA2}
\begin{aligned}
    \Delta A&=\widetilde A-A=\widetilde U\widetilde\Sigma\widetilde V^T-U\Sigma V^T\\
    &=\widetilde U\widetilde \Sigma(V+\Delta V)^T-(\widetilde U-\Delta U)\Sigma V^T\\
    &=\widetilde U\widetilde \Sigma V^T+\widetilde U\widetilde \Sigma(\Delta V)^T-\widetilde U\Sigma V^T+(\Delta U)\Sigma V^T\\
    &=\widetilde U(\Delta \Sigma) V^T+\widetilde U\widetilde \Sigma(\Delta V)^T+(\Delta U)\Sigma V^T. 
\end{aligned}
\end{equation}
Multiplying \eqref{eq:dA1} with $U^T$ on the left and $\widetilde V$ on the right leads to
\begin{equation}\label{eq:dP1}
U^T(\Delta A)\widetilde V=\Delta \Sigma+U^T(\Delta U)\widetilde\Sigma+\Sigma(\Delta V)^T\widetilde V.
\end{equation}
Similarly, multiplying \eqref{eq:dA2} with $\widetilde U^T$ on the left and $V$ on the right we obtain
\begin{equation}\label{eq:dP2}
    \widetilde U^T(\Delta A)V=\Delta\Sigma+\widetilde\Sigma(\Delta V)^T V+\widetilde U^T(\Delta U)\Sigma.
\end{equation}
Denote $dP=U^T(\Delta A)\widetilde V,\ d\bar P=\widetilde U^T(\Delta A)V,\ \Delta \Omega_U=U^T(\Delta U),\ \Delta\Omega_V=V^T(\Delta V)$. Notice that $I = \widetilde U^T\widetilde U=U^T U$ gives $(U+\Delta U)^T\widetilde U=U^T(\widetilde U-\Delta U)$, hence $U^T\Delta U=-\Delta U^T\widetilde U$. Similarly, we also have $V^T\Delta V=-\Delta V^T\widetilde V$. Plugging these into \eqref{eq:dP1} and \eqref{eq:dP2}, we have
\begin{equation}\label{eq:aligned}
\left\{
\begin{aligned}
dP&=U^T\Delta A\widetilde V = \Delta\Sigma+\Delta\Omega_U\widetilde\Sigma-\Sigma\Delta\Omega_V,\\
d\bar P&=\widetilde U^T \Delta A V = \Delta \Sigma+\widetilde \Sigma\Delta\Omega_V^T-\Delta\Omega_U^T\Sigma.
\end{aligned}
\right.
\end{equation}
Next, from \eqref{eq:aligned} we can cancel $\Delta\Omega_V$ by
\begin{align*}
    G_U:&=dP\widetilde\Sigma^T+\Sigma d\bar P^T\\&=\Delta\Sigma\widetilde \Sigma^T+\Sigma(\Delta\Sigma)^T+\Delta\Omega_U\widetilde\Sigma\widetilde\Sigma^T-\Sigma\Sigma^T\Delta\Omega_U\\&=\widetilde\Sigma\widetilde\Sigma^T-\Sigma\Sigma^T+\Delta\Omega_U\widetilde\Sigma\widetilde\Sigma^T-\Sigma\Sigma^T\Delta\Omega_U.
\end{align*}
Let $\Delta\Omega_U=\{w_{ij}\}_{i,j=1}^n$, then for all $1\leq i,j\leq n$, the following equations hold
\begin{equation}\label{eq:GU}
(G_U)_{ij}=\left\{
\begin{aligned}
&(\widetilde\sigma_j^2-\sigma_i^2)w_{ij},& i\neq j,\\
&(\widetilde\sigma_j^2-\sigma_i^2)(w_{ij}+1),&i=j.
\end{aligned}
\right.
\end{equation}
Here if $i>\min\{n,m\}$, we define $\sigma_i$ or $\widetilde\sigma_i$ to be 0. Also, define $F_U^{12},\ F_U^{21},\ F_V^{12},\ F_V^{21}$ as in the statement of Theorem \ref{thm:main}. By assumption, $\widetilde\sigma_r-\sigma_{r+1}>0,\ \sigma_r-\widetilde\sigma_{r+1}>0$, we can directly check that the denominators in these four matrices only have nonzero entries, thus are well defined. Consider the upper right part in $\Delta \Omega_U=U^T(\Delta U)$, that is, $1\leq i\leq r,\ r+1\leq j\leq n$, from \eqref{eq:GU} we have 
\[
w_{ij}=\frac{1}{\widetilde\sigma_j^2-\sigma_i^2}(G_U)_{ij},\ 1\leq i\leq r, \ r+1\leq j\leq n.
\]
Therefore,
\[
U_1^T\widetilde U_2=U_1^T(\Delta U_2)=F_U^{12}\circ (G_U^{12})=F_U^{12}\circ (U_1^T(\Delta A)\widetilde V_2\widetilde\Sigma_2^T+\Sigma_1 V_1^T(\Delta A)^T\widetilde U_2).
\]
Following the same reasoning, we also obtain
\[
U_2^T\widetilde U_1=U_2^T(\Delta U_1)=F_U^{21}\circ(U_2^T(\Delta A)\widetilde V_1\widetilde\Sigma_1^T+\Sigma_2 V_2^T(\Delta A)^T\widetilde U_1),
\]
\[
V_1^T\widetilde V_2=V_1^T(\Delta V_2)=F_V^{12}\circ(\Sigma_1^T U_1^T(\Delta A)\widetilde V_2+ V_1^T(\Delta A)^T\widetilde U_2\widetilde\Sigma_2),
\]
\[
V_2^T\widetilde V_1=V_2^T(\Delta V_2)=F_V^{21}\circ(\Sigma_2^T U_2^T(\Delta A)\widetilde V_1+ V_2^T(\Delta A)^T\widetilde U_1\widetilde\Sigma_1).
\]
\end{proof}

\begin{proof}[Proof of Lemma \ref{lemma:Hnorm}]
Here we only prove the first inequality in \eqref{eq:Hnorm_ineq}, i.e., $|||F_U^{21}\circ (H_1\widetilde\Sigma_1) ||| \leq\frac{\widetilde\sigma_r}{\widetilde\sigma_r^2-\sigma_{r+1}^2}|||H_1|||$, the other three inequalities can be proved similarly. Recall the definition of $F_U^{21}$ is $(F_U^{21})_{i-r,j}=\frac{1}{\widetilde\sigma_j^2-\sigma_i^2},\ r+1\leq i\leq n,\ 1\leq j\leq r$. We directly have 
\[
F_U^{21}\circ(H_1\widetilde\Sigma_1)=\bar F_U^{21}\circ H_1, 
\]
where
\[
(\bar F_U^{21})_{i-r,j}=\frac{\widetilde\sigma_j}{\widetilde\sigma_j^2-\sigma^2_i},\ r+1\leq i\leq n,\ 1\leq j\leq r.
\]
Let  $B_1= F_U^{21}\circ(H_1\widetilde\Sigma_1)$, then $H_1=\widetilde F_U^{21}\circ B_1$, where
\[
(\widetilde F_U^{21})_{i-r,j}=\frac{\widetilde\sigma_j^2-\sigma^2_i}{\widetilde\sigma_j}=\widetilde\sigma_j-\frac{\sigma_i^2}{\widetilde\sigma_j},\ r+1\leq i\leq n,\ 1\leq j\leq r.
\]
Inserting the above expression of $\widetilde F $ into $H_1=\widetilde F_U^{21}\circ B_1$, we have
\begin{align*}
 H_1=   B_1\begin{pmatrix}
\widetilde \sigma_1 &\ &\ &\ \\
\ &\widetilde\sigma_2 &\ &\ \\
\ &\ & \ddots \\
\ &\ &\ &\ \widetilde\sigma_r
\end{pmatrix}-\begin{pmatrix}
\sigma_{r+1}^2 &\ &\ &\ \\
\ &\sigma_{r+2}^2 &\ &\ \\
\ &\ & \ddots \\
\ &\ &\ &\ \sigma_n^2
\end{pmatrix}B_1\begin{pmatrix}
\frac{1}{\widetilde\sigma_1} &\ &\ &\ \\
\ &\frac{1}{\widetilde\sigma_2} &\ &\ \\
\ &\ & \ddots \\
\ &\ &\ &\ \frac{1}{\widetilde\sigma_r}
\end{pmatrix}.
\end{align*}
Take norm on both sides, we obtain 
\[
||| H_1|||\geq  \widetilde\sigma_r||| B_1|||-\frac{\sigma_{r+1}^2}{\widetilde \sigma_r}||| B_1|||=\frac{\widetilde\sigma_r^2-\sigma_{r+1}^2}{\widetilde\sigma_r}||| B_1|||,
\]
which further gives $|||B_1|||\leq\frac{\widetilde\sigma_r}{\widetilde\sigma_r^2-\sigma_{r+1}^2}||| H_1|||$. 
\end{proof}
\begin{remark}\label{rmk:main}
When $\widetilde\sigma_r>\sigma_{r+1},\ \sigma_r>\widetilde\sigma_{r+1}$,the bounds in Lemma \ref{lemma:Hnorm} are tight. That is, in this case, there exists $H_i,\ 1\le i\leq 4$, such that the equalities in \eqref{eq:Hnorm_ineq} and \eqref{eq:Hnorm_ineq2} hold. Specifically, let 
\[
H_1=\begin{pmatrix}
0 & \cdots & 0 & \epsilon\\
0 &\cdots & \cdots & 0\\
\vdots & \vdots & \vdots & \vdots\\
0 & \cdots &\cdots & 0 
\end{pmatrix}\in\mathbb{R}^{(n-r)\times r},\ H_3=\begin{pmatrix}
0 & \cdots & \cdots & 0\\
\vdots &\vdots & \vdots & \vdots\\
0 & \vdots & \vdots & \vdots\\
 \epsilon & 0 &\cdots & 0 
\end{pmatrix}\in\mathbb{R}^{r\times (m-r)},
\]
\[
H_2=\begin{pmatrix}
0 & \cdots & 0 & \epsilon\\
0 &\cdots & \cdots & 0\\
\vdots & \vdots & \vdots & \vdots\\
0 & \cdots &\cdots & 0 
\end{pmatrix}\in\mathbb{R}^{(m-r)\times r},\ H_4=\begin{pmatrix}
0 & \cdots & \cdots & 0\\
\vdots &\vdots & \vdots & \vdots\\
0 & \vdots & \vdots & \vdots\\
 \epsilon & 0 &\cdots & 0 
\end{pmatrix}\in\mathbb{R}^{r\times (n-r)},
\]
then we can directly check that the equalities in \eqref{eq:Hnorm_ineq} and \eqref{eq:Hnorm_ineq2} hold.
\end{remark}

\subsection{Proof of Theorem \ref{thm:2toinf_full}}\label{sec:two_to_infinity}

To prove Theorem \ref{thm:2toinf_full}, we need to decompose $\widetilde{U}_1 -U_1Q$ into a sum of several components and bound them separately. For convenience, we put the decomposition in the following lemma, which is similar in nature to Theorem 3.1 in \cite{cape2019two}.
\begin{proposition}\label{thm:expandU}Set the rotation $Q$ to be $Q = Q_1Q_2^T$, where $Q_1$ and $Q_2$ are the left and right singular vectors from the SVD: $U_1^T\widetilde{U}_1 = Q_1SQ_2^T$, then
\begin{equation}\label{eq:decomposition}
\widetilde{U}_1 -U_1Q =U_2U_2^T\Delta A V_1V_1^T\widetilde V_1\widetilde\Sigma_1^{-1}+U_2U_2^T\Delta A V_2V_2^T\widetilde V_1\widetilde\Sigma_1^{-1}+U_2 \Sigma_2 V_2^T \widetilde V_1\widetilde\Sigma_1^{-1}+U_1Q_1(S-I)Q_2^T,
\end{equation}
and
\begin{equation}\label{eq:S}
\|S-I\|\leq \|\sin\Theta(U_1,\widetilde{U}_1)\|^2.
\end{equation}
\end{proposition}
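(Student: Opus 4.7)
The plan is to produce the identity \eqref{eq:decomposition} by a clean orthogonal split of $\widetilde U_1$ into its components along $U_1$ and $U_2$, then to expand the $U_2$-component using the SVD relation for $\widetilde A$, and finally to handle the rotation-residual $S-I$ by a direct scalar inequality on cosines.

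First I would use $I_n = U_1 U_1^T + U_2 U_2^T$ to write
\[
\widetilde U_1 \;=\; U_1 (U_1^T \widetilde U_1) + U_2 (U_2^T \widetilde U_1).
\]
Since $Q=Q_1 Q_2^T$ and $U_1^T \widetilde U_1 = Q_1 S Q_2^T$, the first summand minus $U_1 Q$ collapses to
\[
U_1 (U_1^T \widetilde U_1 - Q) \;=\; U_1 Q_1 (S-I) Q_2^T,
\]
which is the fourth term on the right-hand side of \eqref{eq:decomposition}. So it remains to rewrite $U_2 U_2^T \widetilde U_1$ as the first three terms.

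For this, I would invoke the defining SVD relation $\widetilde A \widetilde V_1 = \widetilde U_1 \widetilde\Sigma_1$. Under the theorem's assumption $21\sigma\sqrt{\bar n}<\sigma_r(A)-\sigma_{r+1}(A)$ (together with Weyl) we may invert $\widetilde\Sigma_1$ and write $\widetilde U_1 = \widetilde A \widetilde V_1 \widetilde\Sigma_1^{-1}$. Substituting $\widetilde A = A + \Delta A$ and using $U_2^T A = \Sigma_2 V_2^T$ (immediate from the conformal SVD), I get
\[
U_2^T \widetilde U_1 \;=\; \Sigma_2 V_2^T \widetilde V_1 \widetilde\Sigma_1^{-1} \;+\; U_2^T (\Delta A)\, \widetilde V_1 \widetilde\Sigma_1^{-1}.
\]
Inserting $I_m = V_1 V_1^T + V_2 V_2^T$ between $\Delta A$ and $\widetilde V_1$ splits the noise contribution into the two pieces involving $V_1 V_1^T\widetilde V_1$ and $V_2 V_2^T \widetilde V_1$. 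Left-multiplying by $U_2$ yields exactly the first three terms of \eqref{eq:decomposition}, completing the decomposition.

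For the bound \eqref{eq:S}, note that the singular values of $U_1^T\widetilde U_1$ are the cosines of the principal angles, so $S = \mathrm{diag}(\cos\theta_1,\ldots,\cos\theta_r)$ with $\theta_i\in[0,\pi/2]$. Hence $S-I$ is diagonal with entries $\cos\theta_i - 1 \le 0$, and the elementary inequality
\[
1 - \cos\theta \;\le\; 1-\cos^2\theta \;=\; \sin^2\theta \quad (\theta\in[0,\pi/2])
\]
gives $\|S-I\| = \max_i (1-\cos\theta_i) \le \max_i \sin^2\theta_i = \|\sin\Theta(U_1,\widetilde U_1)\|^2$. No step is genuinely hard here; the only thing requiring minor care is invertibility of $\widetilde\Sigma_1$, which follows from the gap hypothesis via Proposition \ref{pro:weyl}.
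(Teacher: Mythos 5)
Your proof is correct and follows essentially the same route as the paper's: split off the $U_2U_2^T\widetilde U_1$ component (the paper does this by adding and subtracting $U_1Q_1SQ_2^T$, you by applying $I=U_1U_1^T+U_2U_2^T$ to $\widetilde U_1$, which is algebraically the same step), expand it via $\widetilde U_1=\widetilde A\widetilde V_1\widetilde\Sigma_1^{-1}$ and $U_2^TA=\Sigma_2V_2^T$, then insert $V_1V_1^T+V_2V_2^T$. The bound on $\|S-I\|$ via $1-\cos\theta\le\sin^2\theta$ is identical to the paper's $1-\min_iS_i\le 1-\min_iS_i^2$.
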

\begin{proof}
By direct calculation,  we have
\begin{align}\label{eq:dec}
    \widetilde U_1-U_1 Q&=\widetilde U_1-U_1 Q_1Q_2^T\notag\\
    &=\widetilde U_1-U_1Q_1SQ_2^T+U_1Q_1(S-I)Q_2^T\notag\\
    &=\widetilde U_1-U_1U_1^T\widetilde U_1+U_1Q_1(S-I)Q_2^T\notag\\
    &=U_2U_2^T\widetilde U_1+U_1Q_1(S-I)Q_2^T\notag\\
    &=U_2U_2^T\Delta A \widetilde V_1\widetilde\Sigma_1^{-1}+U_2 \Sigma_2 V_2^T \widetilde V_1\widetilde\Sigma_1^{-1} +U_1Q_1(S-I)Q_2^T\\
  &=U_2U_2^T\Delta A V_1V_1^T\widetilde V_1\widetilde\Sigma_1^{-1}+U_2U_2^T\Delta A V_2V_2^T\widetilde V_1\widetilde\Sigma_1^{-1}+U_2 \Sigma_2 V_2^T \widetilde V_1\widetilde\Sigma_1^{-1}+U_1Q_1(S-I)Q_2^T\notag. 
\end{align}

In addition, since $\|S\|=\|U_1^T\widetilde U_1\|\leq 1$, $$\|S-I\|=1-\min_i S_i\leq {1-\min_i S_i^2}=\|\sin\Theta(U_1,\widetilde U_1)\|^2,$$
where $S_i$ is the $i$th diagonal entry of $S$. Hence
\[
\|U_1 Q_1(S-I)Q_2^T\|_{2,\infty}\leq\|U_1\|_{2,\infty}\|S-I\| \leq \|U_1\|_{2,\infty}\|\sin\Theta(U_1,\widetilde U_1)\|^2.
\]
\end{proof}
The first and the last terms in the expansion \eqref{eq:decomposition} are easy to bound, the following lemma is devoted to bounding the middle terms, which requires invoking the angular perturbation formula Theorem \ref{thm:main2}. 
\begin{lemma}\label{lemma:full_rank}
Under the assumption of Theorem \ref{thm:2toinf_full}, it holds that
\begin{equation*}
    \max\{\|U_2U_2^T\Delta AV_2 V_2^T\widetilde V_1\widetilde\Sigma_1^{-1}\|_{2,\infty},\|U_2\Sigma_2 V_2^T\widetilde V_1\widetilde\Sigma_1^{-1}\|_{2,\infty}\}\leq C\frac{\sigma R(r, n)}{\sigma_r(A)-\sigma_{r+1}(A)},
\end{equation*}
where $C$ is some constant and 
$$
R(r, n) = \left\{
\begin{aligned}
&\sqrt r+\sqrt{\log  n},\quad & \textrm{if $A$ is of rank $r$}; \\
& r+\sqrt{r \log n}, & \textrm{else}.
\end{aligned}\right.
$$
\end{lemma}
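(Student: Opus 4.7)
The plan is to bound $T_1:=U_2U_2^T\Delta A V_2V_2^T\widetilde V_1\widetilde\Sigma_1^{-1}=U_2\alpha_{22}(V_2^T\widetilde V_1)\widetilde\Sigma_1^{-1}$ and $T_2:=U_2\Sigma_2 V_2^T\widetilde V_1\widetilde\Sigma_1^{-1}$ separately, expanding $V_2^T\widetilde V_1$ via the angular perturbation formulae of Section~\ref{sec:formula} and estimating the resulting Neumann summands row by row. The rank-$r$ case is cleaner: Corollary~\ref{lemma:Z} yields a series involving only $\alpha_{22},\alpha_{12},\alpha_{21}$ and $\widetilde\Sigma_1^{-1}$, and since $\Sigma_2=0$ the term $T_2$ vanishes entirely. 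The general-rank case additionally requires the full Theorem~\ref{thm:main2} together with the Hadamard products against $F_V^{21},F_U^{21}$.

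As preliminaries, I would collect (with probability $1-O(n^{-2})$) the standard Gaussian bounds $\|\Delta A\|,\|\alpha_{12}\|,\|\alpha_{21}\|,\|\alpha_{22}\|\lesssim\sigma\sqrt{\bar n}$ and the row-wise bound $\|U_2\alpha_{22}\|_{2,\infty}=\|(I-U_1U_1^T)\Delta A V_2\|_{2,\infty}\lesssim\sigma\sqrt{\bar n}$, the latter by applying Gaussian norm concentration to each of the $n$ rows and then taking a union bound. Combined with the gap hypothesis and Weyl's inequality, these give $\widetilde\sigma_r\gtrsim\sigma_r-\sigma_{r+1}$ and a contraction constant $\rho:=\sigma\sqrt{\bar n}/\widetilde\sigma_r\leq 1/20$ that ensures geometric convergence of the Neumann series. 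The central idea is a Gaussian independence argument: decomposing $\Delta A=U_1B_1+U_2B_2$ with $B_j:=U_j^T\Delta A$, the blocks $B_1$ and $B_2$ are independent Gaussian matrices, and within $B_2$ the projections $\alpha_{21}=B_2V_1$ and $\alpha_{22}=B_2V_2$ are independent because $V_1\perp V_2$. Every summand in the Neumann series for $T_1$ therefore factors as a prefix which is a function of $B_2$ alone, times $\alpha_{12}^T$, times some bounded trailing factor; or as a prefix which is a function of $\alpha_{22}$ alone, times $\alpha_{21}$, times a bounded trailing factor. In either case the middle factor is an independent $r$-column Gaussian; conditioning on the prefix, Gaussian norm concentration in $\mathbb{R}^r$ plus a union bound over the $n$ rows replaces the spectral estimate $\|\alpha_{12}\|\lesssim\sigma\sqrt{\bar n}$ (or $\|\alpha_{21}\|\lesssim\sigma\sqrt{\bar n}$) by the tighter row-wise $\lesssim\sigma(\sqrt r+\sqrt{\log n})$. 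The $k$-th summand then contributes on the order of $\rho^{2k+1}\cdot\sigma(\sqrt r+\sqrt{\log n})/\widetilde\sigma_r$, and summing in $k$ gives the desired $\sigma R(r,n)/(\sigma_r-\sigma_{r+1})$ with $R(r,n)=\sqrt r+\sqrt{\log n}$ in the rank-$r$ case.

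For the general-rank case, the extra $\Sigma_2,\Sigma_2^T$ factors appearing in Theorem~\ref{thm:main2} couple with the Hadamard weights $F_V^{21},F_U^{21}$ through an $\ell_{2,\infty}$ extension of Lemma~\ref{lemma:Hnorm} (which goes through by the same entry-wise division argument), and they produce at least one summand in which neither $\alpha_{12}$ nor $\alpha_{21}$ can be isolated as a fresh $r$-column Gaussian independent of the prefix. This forces the weaker scaling $R(r,n)=r+\sqrt{r\log n}$; heuristically, in the missing Gaussian-independence step a spectral estimate costs an additional $\sqrt r$ compared with the row-wise one, yielding the extra $\sqrt r$ in $R(r,n)$. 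The term $T_2$ is handled by the same workflow: substitute the implicit formula of Theorem~\ref{thm:main} for $V_2^T\widetilde V_1$ into $T_2$, dominate the $\Sigma_2$-weighted Hadamard product using Lemma~\ref{lemma:Hnorm} in $\ell_{2,\infty}$, and bound the remaining Gaussian factors row-wise. The main obstacle throughout is attaining the $\sqrt r$ rather than $\sqrt{\bar n}$ scaling on the fresh $\alpha$-factors: the naive estimate $\|AB\|_{2,\infty}\leq\|A\|_{2,\infty}\|B\|$ combined with spectral bounds on $\|B\|$ overshoots the target by a factor $\sqrt{\bar n/r}$. Systematically identifying, for each summand of the Neumann series, which $\alpha$-matrix is independent of the rest, and propagating this through the Hadamard products in the general case, is the bookkeeping-heavy step that dominates the technical work.
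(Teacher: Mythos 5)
Your rank-$r$ analysis is essentially the paper's: expand $V_2^T\widetilde V_1$ via Corollary~\ref{lemma:Z}, observe that $\alpha_{22}$ is independent of $\alpha_{12}$ and of $\alpha_{21}$, condition on $\alpha_{22}$, apply Gaussian norm concentration in $\mathbb{R}^r$ to the fresh $\alpha$-block to get $\sigma(\sqrt{r}+\sqrt{\log n}+\sqrt{k})$ rather than $\sigma\sqrt{\bar n}$, and sum the geometric series with ratio $\|\alpha_{22}\|/\widetilde\sigma_r<1/2$. Your observation that $T_2\equiv 0$ when $\Sigma_2=0$ is also what the paper uses. One small imprecision: the $\ell_{2,\infty}$ control on $U_2\alpha_{22}$ follows trivially from $\|u_i^T\alpha_{22}\|\leq\|u_i\|\|\alpha_{22}\|\leq\|\alpha_{22}\|$ since rows of $U_2$ have norm at most~$1$; no additional per-row concentration plus union bound is needed there.

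The full-rank explanation, however, contains a genuine gap. You attribute the worsening from $\sqrt r+\sqrt{\log n}$ to $r+\sqrt{r\log n}$ to a claimed breakdown of the Gaussian-independence step, asserting that some Neumann summand no longer lets you isolate $\alpha_{12}$ or $\alpha_{21}$ as a fresh $r$-column Gaussian independent of the prefix, so that a spectral bound costing an extra $\sqrt r$ must be used. This is not what happens, and it cannot be made to work: if you had to replace a row-wise Gaussian estimate by a spectral one you would pay $\sqrt{\bar n}/(\sqrt r+\sqrt{\log n})$, not $\sqrt r$ --- a point you in fact note yourself in the last paragraph, which makes the proposed mechanism internally inconsistent. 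In the paper's full-rank argument (Appendix~\ref{Sec:app}), the conditioning argument still goes through for every summand: after fixing $\alpha_{22}$, each term $T(j,q_0,\ldots,q_k)$ is a linear image of the single fresh Gaussian block $\gamma\in\{\alpha_{12}^T,\alpha_{21}\}$, and one still obtains the row-wise scaling $\sqrt r+\sqrt{\log n}+\sum_s\sqrt{q_s}+\sqrt k$. The extra $\sqrt r$ arises elsewhere: the Hadamard weights $F_U^{21},F_V^{21}$ depend on the column index $j$ through $\widetilde\sigma_j$, which forces a coordinate-by-coordinate analysis of $w_j=\langle M_j,\widetilde\sigma_j^{m-1}\beta_j\rangle$, together with a power-series re-expansion of each $1/(\widetilde\sigma_j^2-\sigma_{r+l}^2)$ around $1/(\sigma_j^2-\sigma_{r+l}^2)$. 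Reassembling the row $w$ from these $r$ per-coordinate bounds via $\|w\|\leq\sqrt r\max_j|w_j|$ is the actual source of the additional $\sqrt r$. Your sketch does not contain this per-coordinate expansion nor the $\ell_2$ aggregation, so it would not by itself deliver $R(r,n)=r+\sqrt{r\log n}$. Finally, for $T_2$ you would need the explicit Neumann formula of Theorem~\ref{thm:main2}, not the implicit identity of Theorem~\ref{thm:main}, since the latter still carries $V_2^T\widetilde V_1$ on both sides.
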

Before proving this lemma, let us first see how to use it to prove Theorem \ref{thm:2toinf_full}.

\begin{proof}[\textbf{Proof of Theorem \ref{thm:2toinf_full}}]Due to \eqref{eq:decomposition}, we have
\begin{align*}
\min_{\widetilde{Q}\in \mathbb{O}_r} \|\widetilde{U}_1 -U_1 \widetilde{Q}\|_{2,\infty} &\leq \underbrace{\|U_2U_2^T\Delta A V_1V_1^T\widetilde{V}_1\widetilde\Sigma_1^{-1}\|_{2,\infty}}_{(\RN{1})}+\|U_2U_2^T\Delta A V_2 V_2^T\widetilde V_1\widetilde\Sigma_1^{-1}\|_{2,\infty}\notag\\
&\quad\ +\|U_2\Sigma_2 V_2^T\widetilde V_1\widetilde\Sigma_1^{-1}\|_{2,\infty}+\underbrace{\|U_1\|_{2,\infty}\|\sin\Theta(U_1,\widetilde{U}_1)\|^2}_{(\RN{2})}.
\end{align*}
The two middle terms are bounded in Lemma \ref{lemma:full_rank} . We are left to bound the first and the last terms. For the last term, we have
\begin{align*}
(\RN{2}) 
& \leq \|U_1\|_{2,\infty}\left(\frac{2\|\Delta A\|}{\sigma_r(A)-\sigma_{r+1}(A)}\right)^2\leq \|U_1\|_{2,\infty}\frac{36\sigma^2 \bar n}{(\sigma_r(A)-\sigma_{r+1}(A))^2}.
\end{align*}
Here the first inequality used \eqref{eq:uniform} in Lemma \ref{lm:ufoneside}, the second one used Corollary 7.3.3 of \cite{vershynin2018high} which bounds the spectral norm of i.i.d. Gaussian matrices: with probability at least $1-e^{-c\bar n}$ for some absolute constant $c$, $\|\Delta A\|\leq 3\sigma\sqrt{\bar n}$.

Next we bound $(\RN{1})$.
\begin{align}\label{eq:Ipart1}
\begin{split}
(\RN{1}) &= \|U_2U_2^T\Delta A V_1V_1^T\widetilde{V}_1\widetilde\Sigma_1^{-1}\|_{2,\infty} \leq \|U_2U_2^T\Delta A V_1\|_{2,\infty}\|V_1^T\widetilde{V}_1\widetilde\Sigma_1^{-1}\|\\
&\leq \frac{1}{\sigma_r(\widetilde{A})}\|U_2U_2^T\Delta A V_1\|_{2,\infty}\leq \frac{7}{6\sigma_r(A)}\|U_2U_2^T\Delta A V_1\|_{2,\infty}.
\end{split}
\end{align}
Here the last inequality is by Weyl's bound and the assumption $\sigma_r(A)>21\sigma\sqrt{\bar n}$. \eqref{eq:Ipart1} implies that it suffices to bound the row norms of $U_2U_2^T\Delta A V_1$. Since $\Delta A$ is i.i.d. $N(0,\sigma^2)$, $U_2$ and $V_1$ are independent of $\Delta A$ and that $\|U_2\|=\|V_1\|= 1$, then each row of $U_2U_2^T\Delta A V_1$ is a Gaussian vector having independent Gaussian entries with mean 0 and variance at most $\sigma^2$. By exactly the same proof as Theorem 3.1.1 in \cite{vershynin2018high}, there exists a constant $c$ such that for all $t>0$,
\[
\mathbb{P}(\left|\|u_i^TU_2^T\Delta A V_1\|-\sigma\|u_i^TU_2^T\|\sqrt{r}\right|\geq t)<2e^{-\frac{ct^2}{\sigma^2\|u_i^TU_2^T\|^2}},
\]
where $u_i^T$ is the $i$th row of $U_2$.




Setting in the above $t=\sigma\|u_i^TU_2^T\|\sqrt{3\log n/c}$, then with probability at least $1-\frac{2}{n^3}$,
\[
\|u_i^TU_2^T\Delta A V_1\|\leq c_1\sigma(\sqrt{r}+\sqrt{\log n})\|u_i^TU_2^T\|\leq c_1\sigma(\sqrt{r}+\sqrt{\log n}),
\]
with some constant $c_1$. By the union bound, the probability of failure for all the rows is at most $\frac{2}{n^2}$. Hence with probability at least $1-\frac{2}{n^2}$, it holds 
\[
\|U_2U_2^T\Delta A V_1\|_{2,\infty} \leq c_1\sigma(\sqrt{r}+\sqrt{\log n}).
\]
Plugging this into \eqref{eq:Ipart1}, we obtain
\[
(\RN{1})\leq \frac{c_1\sigma(\sqrt r+\sqrt{\log n})}{\sigma_r(A)}.
\]
Combining the bounds on $\RN{1}$, $\RN{2}$ and Lemma \ref{lemma:full_rank} completes the proof.
\end{proof}
\subsection{Proof of  Lemma \ref{lemma:full_rank}}\label{subsec:lemma44}
Here we first provide the proof for the low-rank case to give the reader some intuition. The full-rank case follows a similar idea but is quite notationally heavy, {we defer the proof of Lemma \ref{lemma:full_rank} for full-rank case to appendix.}

\begin{proof}[\textbf{Proof of Lemma \ref{lemma:full_rank}- the low-rank case}]
When $A$ is of rank $r$, the second quantity to be bounded in Lemma \ref{lemma:full_rank} is 0, hence we focus on the first quantity $\|U_2U_2^T\Delta A V_2 V_2^T\widetilde V_1\widetilde\Sigma_1^{-1}\|_{2,\infty}$.

Let $u_i^T$ be the $i$th row of $U_2$, then by Corollary \ref{lemma:Z}, the $i$th row of $U_2U_2^T\Delta A V_2V_2^T\widetilde V_1\widetilde\Sigma_1^{-1}$ can be expressed as
\begin{align}
u_i^TU_2^T\Delta A V_2V_2^T\widetilde V_1\widetilde\Sigma_1^{-1}&=u_i^T\alpha_{22}\left(\sum_{k=0}^{\infty}(\alpha_{22}^T\alpha_{22})^k(\alpha_{12}^TU_1^T\widetilde U_1\widetilde\Sigma_1^{-1}+\alpha_{22}^T\alpha_{21}V_1^T\widetilde V_1\widetilde\Sigma_1^{-2})(\widetilde\Sigma_1^{-2})^k\right)\widetilde\Sigma_1^{-1}\notag\\
&=u_i^T\left(\sum_{k=0}^{\infty}(\alpha_{22}\alpha_{22}^T)^k(\alpha_{22}\alpha_{12}^TU_1^T\widetilde U_1\widetilde\Sigma_1^{-1}+\alpha_{22}\alpha_{22}^T\alpha_{21}V_1^T\widetilde V_1\widetilde\Sigma_1^{-2})(\widetilde\Sigma_1^{-2})^k\right)\widetilde\Sigma_1^{-1}, \label{eq:II}
\end{align}
where $\alpha_{ij} = U_i^T\Delta AV_j$. Due to the orthogonality of $U$ and $V$, the entries in each $\alpha_{ij}$ follow i.i.d. $N(0,\sigma^2)$ distribution, and $\alpha_{22}$ is independent of $\alpha_{12}$. This further implies that $u_i^T(\alpha_{22}\alpha_{22}^T)^k\alpha_{22}$ and $u_i^T(\alpha_{22}\alpha_{22}^T)^k\alpha_{22}\alpha_{22}^T$ are independent of $\alpha_{12}$ and $\alpha_{21}$, respectively. Conditional on $\alpha_{22}$,  $u_i^T(\alpha_{22}\alpha_{22}^T)^k\alpha_{22}\alpha_{12}^T$ varies with $\alpha_{12}$, and it follows normal distribution. Again by Theorem 3.1.1 in \cite{vershynin2018high}, for fixed $k=0,...$, there exists a constant $c$ such that 
\[
\mathbb{P}(\left|\|u_i^T(\alpha_{22}\alpha_{22}^T)^k\alpha_{22}\alpha_{12}^T\|-\sigma\sqrt{r}\|u_i^T(\alpha_{22}\alpha_{22}^T)^k\alpha_{22}\|\right|>t)\leq 2\exp\left(-\frac{ct^2}{\sigma^2\|u_i^T(\alpha_{22}\alpha_{22}^T)^k\alpha_{22}\|^2}\right).
\]
Setting in the above 
$t=\sigma\|u_i^T(\alpha_{22}\alpha_{22}^T)^k\alpha_{22}\|\sqrt{\log(n^3\cdot 2^k)/c}
$, we get with probability at least $1-\frac{2}{2^kn^3}$,
\begin{align}\label{eq:singleterm}
\|u_i^T(\alpha_{22}\alpha_{22}^T)^k\alpha_{22}\alpha_{12}^T\|&\leq \sigma\sqrt{r}\|u_i^T(\alpha_{22}\alpha_{22}^T)^k\alpha_{22}\|+t \notag \\& \leq c_2 \sigma\|u_i^T(\alpha_{22}\alpha_{22}^T)^k\alpha_{22}\|(\sqrt{r}+\sqrt{\log n}+\sqrt{k}),
\end{align}
where $c_2$ is some absolute constant.
Then 
\begin{align*}
\|u_i^T(\alpha_{22}\alpha_{22}^T)^k\alpha_{22}\alpha_{12}^TU_1^T\widetilde U_1\widetilde\Sigma_1^{-(2k+2)}\|&\leq c_2\frac{\sigma}{\widetilde\sigma_r}\left(\frac{\|\alpha_{22}\|}{\widetilde\sigma_r}\right)^{2k+1}(\sqrt{r}+\sqrt{\log n}+\sqrt{k}).
\end{align*}
Let $\lambda =\frac{\|\alpha_{22}\|}{\widetilde\sigma_r}$. We next argue that $\lambda<1/2$.  By Corollary 7.3.3 of \cite{vershynin2018high},  $\|\Delta A\|\leq  3\sigma\sqrt{\bar n}$ with probability at least $1-e^{-c \bar n}$. On this event, by Weyl's bound,
$$\widetilde{\sigma}_r \geq \sigma_r-\|\Delta A\| \geq\sigma_r- 3\sigma\sqrt{\bar n} \geq 18\sigma\sqrt {\bar n} \geq 6\|\Delta A\| \geq 6\|\alpha_{22}\|,$$ 
which implies $\lambda<1/2$.  The third inequality above is due to the assumption $21\sigma\sqrt {\bar n}<\sigma_r$.  By union bound on the probability of failure of \eqref{eq:singleterm} over all $k=0,...$, we have with probability at least $1-\frac{4}{n^3}$,
\begin{align*}
\sum_{k=0}^{\infty}\|u_i^T(\alpha_{22}\alpha_{22}^T)^k\alpha_{22}\alpha_{12}^TU_1^T\widetilde U_1\widetilde\Sigma_1^{-(2k+2)}\|_2&\leq c_3\frac{\sigma}{\widetilde\sigma_r}\left( \sum_{k=0}^{\infty} \sqrt k \lambda^{2k+1} + (\sqrt{\log n}+\sqrt{r})\sum_{k=0}^{\infty} \lambda^{2k+1}\right)\\& \leq c_4\frac{\sigma}{\widetilde\sigma_r}(\sqrt r+\sqrt{\log n}) \\
& \leq c_5\sigma \frac{\sqrt r+\sqrt{\log n}}{\sigma_r(A)}, 
\end{align*}
with $c_3-c_5$ being absolute constants, where the last inequality used Weyl's bound and the assumption $\sigma_r(A)>21\sigma\sqrt{\bar n}$, 
and the second inequality used the fact that for any $0<\lambda<1/2$, we have
\begin{equation}\label{eq:exp_sum}
\sum_{k=0}^{\infty}  \sqrt k \lambda^{2k+1}\leq \sum_{k=0}^{\infty} \sqrt k \lambda^k \leq \sum_{k=1}^{\infty} (k+1) \lambda^k = \frac{d}{d\lambda}\left(\frac{1}{1-\lambda}\right)-1 \leq \frac{2\lambda}{(1-\lambda)^2} < 4.
\end{equation}
Following the same reasoning, with probability at least $1-\frac{4}{n^3}$,
\[
\sum_{k=0}^{\infty}\|u_i^T(\alpha_{22}\alpha_{22}^T)^{(k+1)}\alpha_{21}V_1^T\widetilde V_1\widetilde\Sigma_1^{-(2k+3)}\|_2\leq c_6\sigma \frac{\sqrt r+\sqrt{\log n}}{\sigma_r(A)},
\]
for some constant $c_6$. Using these in \eqref{eq:II}, by the union bound, we obtain that with probability at least $1-\frac{8}{n^2}$,
\[
\|U_2U_2^T\Delta A V_2V_2^T\widetilde V_1\widetilde\Sigma_1^{-1}\|_{2,\infty} \leq c_7\sigma \frac{\sqrt r+\sqrt{\log n}}{\sigma_r(A)},
\]
where $c_7$ is some constant.
\end{proof}

\subsection{Proof of Theorem \ref{thm:thresh}}\label{subsec:thm29}
Although Theorem \ref{thm:thresh} is motivated and could be proved by Theorem \ref{thm:single}, we provide an alternative proof that is more straightforward. For this purpose, 
we will need the following lemmas.
\begin{lemma}\label{lm:threh} Under the same assumption as Theorem \ref{thm:thresh}, we have
\[
A_r-\widetilde{A}_r=
U\begin{pmatrix} -U_1^T\Delta A\widetilde{V}_1 & -U_1^T\Delta A\widetilde{V}_2 \\ - U_2^TAV_2V_2^T\widetilde{V}_1&  0 \end{pmatrix}\widetilde{V}^T+U\begin{pmatrix} 0 & U_1^T\widetilde{U}_2\widetilde{U}_2^T\widetilde{A}\widetilde{V}_2 \\ -U_2^T\Delta A\widetilde{V}_1 &  0 \end{pmatrix}\widetilde{V}^T.
\]
\end{lemma}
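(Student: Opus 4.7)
The plan is to use the obvious identity $A_r - \widetilde{A}_r = U U^T(A_r-\widetilde{A}_r)\widetilde{V}\widetilde{V}^T$ (valid because $U$ and $\widetilde{V}$ are square orthogonal) and compute the four blocks of the middle matrix $U^T(A_r-\widetilde{A}_r)\widetilde{V}$ in the conformal partition. Since $A_r=U_1\Sigma_1 V_1^T$ and $\widetilde A_r=\widetilde U_1\widetilde\Sigma_1\widetilde V_1^T$, each block is an essentially one-line SVD calculation. Specifically, the $(1,1)$ block is $\Sigma_1 V_1^T\widetilde V_1-U_1^T\widetilde U_1\widetilde\Sigma_1$, the $(1,2)$ block is $\Sigma_1 V_1^T\widetilde V_2$, the $(2,1)$ block is $-U_2^T\widetilde U_1\widetilde\Sigma_1$, and the $(2,2)$ block vanishes.

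Next I would rewrite each of these blocks in the form claimed by the lemma, using only the substitution $E=\widetilde A - A$ together with the conformal decompositions of $A$ and $\widetilde A$. For the $(1,1)$ block, sandwich $E$ between $U_1^T$ and $\widetilde V_1$ and expand both $\widetilde A$ and $A$: this gives $U_1^TE\widetilde V_1=U_1^T\widetilde U_1\widetilde\Sigma_1-\Sigma_1 V_1^T\widetilde V_1$, which is exactly the negative of the computed block. For the $(2,1)$ block, note that $U_2^T A=\Sigma_2V_2^T$, so inserting $V_2V_2^T$ gives $U_2^T AV_2V_2^T\widetilde V_1=\Sigma_2V_2^T\widetilde V_1=U_2^T A\widetilde V_1$; adding $U_2^T E\widetilde V_1=U_2^T\widetilde U_1\widetilde\Sigma_1-U_2^TA\widetilde V_1$ recovers $U_2^T\widetilde U_1\widetilde\Sigma_1$ and the claimed formula follows. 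For the $(1,2)$ block, use $\widetilde U_2\widetilde U_2^T\widetilde A\widetilde V_2=\widetilde U_2\widetilde\Sigma_2$ (since $\widetilde U_2^T\widetilde A\widetilde V_2=\widetilde\Sigma_2$), so $U_1^T\widetilde U_2\widetilde U_2^T\widetilde A\widetilde V_2-U_1^TE\widetilde V_2=U_1^T\widetilde U_2\widetilde\Sigma_2-U_1^T\widetilde U_2\widetilde\Sigma_2+\Sigma_1 V_1^T\widetilde V_2=\Sigma_1 V_1^T\widetilde V_2$, matching the computed block. The $(2,2)$ block is already $0$ on both sides.

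Finally I would reassemble the four blocks, noting that the two summands in the claimed decomposition simply correspond to splitting the matrix into its ``$E$-driven'' diagonal/anti-diagonal terms and its ``subspace alignment'' terms $U_1^T\widetilde U_2\widetilde U_2^T\widetilde A\widetilde V_2$ and $U_2^TAV_2V_2^T\widetilde V_1$, which encode how the complementary subspaces interact. Wrapping the block expression with $U$ on the left and $\widetilde V^T$ on the right yields the formula in the statement.

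The only mild obstacle is bookkeeping: keeping straight which block sits in which summand and verifying that the terms involving $\widetilde U_2\widetilde U_2^T\widetilde A$ and $AV_2V_2^T$ telescope with the corresponding $E$-terms. Once the block-by-block identities are verified as above, the result is immediate; no nontrivial estimates or appeals to Theorem \ref{thm:main} are needed for this lemma---it is a pure algebraic rearrangement.
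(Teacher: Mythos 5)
Your proof is correct and takes essentially the same approach the paper indicates: express both $A_r=U_1\Sigma_1V_1^T$ and $\widetilde A_r=\widetilde U_1\widetilde\Sigma_1\widetilde V_1^T$, compute the four blocks of $U^T(A_r-\widetilde A_r)\widetilde V$, and reconcile each block with the claimed decomposition via the substitution $E=\widetilde A-A$. The paper's ``proof'' merely asserts this is straightforward; you have carried out the block-by-block verification explicitly, and all four identities check out.
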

\begin{proof}
The lemma can be straightforwardly verified by using the relation $A_r=U_1\Sigma_1V_1^T$ and  $\widetilde{A}_r=\widetilde{U}_1\widetilde{\Sigma}_1\widetilde{V}_1^T$. 
\end{proof}
\begin{lemma}[Lemma 2 in \cite{luo2021schatten}]\label{lemma:copy}
Suppose $x_1\geq x_2\geq...\geq x_k\geq 0$ and $y_1\geq y_2\geq...\geq y_k\geq 0$. For any $1\leq j\leq k,\ \sum_{i=1}^j x_i\leq\sum_{i=1}^j y_i$. Then for any $p\geq 1$,
\[
\sum_{i=1}^k x_i^p\leq\sum_{i=1}^k y_i^p.
\]
The equality holds if and only if $(x_1,x_2,...,x_k)=(y_1,y_2,...,y_k)$.
\end{lemma}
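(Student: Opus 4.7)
The plan is to recognize that the hypothesis is precisely the definition of weak sub-majorization $x\prec_w y$, and since $\phi(t)=t^p$ is a non-decreasing convex function on $[0,\infty)$ when $p\geq 1$, the conclusion is the classical Hardy--Littlewood--P\'olya--Karamata (Tom\'ic--Weyl) inequality. I would give a short self-contained derivation via Abel summation, since this is all that is needed.

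The first step is the supporting line inequality for the convex function $\phi$: at each index $i$, $\phi(y_i)-\phi(x_i)\geq \phi'(x_i)(y_i-x_i)=p\,x_i^{p-1}(y_i-x_i)$. Summing over $i$ reduces the goal to showing $\sum_{i=1}^{k}(y_i-x_i)\phi'(x_i)\geq 0$. Set $a_i:=y_i-x_i$ with partial sums $A_j:=\sum_{i=1}^{j}a_i$; by hypothesis $A_j=\sum_{i=1}^{j}y_i-\sum_{i=1}^{j}x_i\geq 0$ for every $j$. Set $c_i:=\phi'(x_i)=p\,x_i^{p-1}$ and $c_{k+1}:=0$. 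Since $x_1\geq\cdots\geq x_k\geq 0$ and $\phi'$ is non-decreasing on $[0,\infty)$, the sequence $c_i$ is non-increasing and non-negative, so $c_i-c_{i+1}\geq 0$ for every $1\leq i\leq k$ and $c_k\geq 0$.

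Now the Abel summation identity $\sum_{i=1}^{k}a_i c_i = A_k c_k + \sum_{i=1}^{k-1}A_i(c_i-c_{i+1})$ writes the sum of interest as a combination of non-negative terms, hence it is $\geq 0$. Combined with the supporting line inequality, this yields $\sum_i y_i^p\geq \sum_i x_i^p$, as claimed. For the equality case when $p>1$, strict convexity of $\phi$ forces equality in the supporting line inequality to hold only when $y_i=x_i$ for every $i$, which gives the ``if and only if'' statement (the edge case $p=1$, where equality in the conclusion is just $\sum x_i=\sum y_i$ and need not force componentwise equality, is implicitly excluded by the strict convexity needed for the equality claim).

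The main obstacle: essentially none --- this is a textbook majorization argument once the supporting line plus Abel summation template is in place. The only point requiring care is correctly pairing the hypothesis ``non-negative partial sums of $y_i-x_i$'' with a non-increasing, non-negative multiplier sequence $\phi'(x_i)$ so that every term of the Abel sum carries the correct sign.
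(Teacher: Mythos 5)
Your proof is correct. Note, however, that the paper does not actually prove this lemma: it is imported verbatim as ``Lemma 2 in \cite{luo2020schatten}'' and used as a black box, so there is no in-paper argument to compare against. Your derivation is the standard Tom\'ic--Weyl/Karamata argument for weak sub-majorization, done cleanly via the supporting-line inequality for the convex map $t\mapsto t^p$ followed by Abel summation against the non-increasing, non-negative multiplier sequence $c_i=\phi'(x_i)$; the sign bookkeeping ($A_j\ge 0$ from the hypothesis, $c_i-c_{i+1}\ge 0$ from the ordering of the $x_i$ and the monotonicity of $\phi'$, $c_k\ge 0$) is exactly right. You are also correct to flag that the ``if and only if'' clause of the equality statement genuinely fails at $p=1$ --- for instance $x=(1,1)$, $y=(2,0)$ gives $\sum x_i=\sum y_i$ with the partial-sum hypothesis satisfied but $x\neq y$ --- so that clause should be read as holding only for $p>1$, where strict convexity of $t^p$ forces termwise equality in the supporting-line step. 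The one small thing worth making explicit if you were to write this up: equality in $\sum y_i^p-\sum x_i^p\ge\sum\phi'(x_i)(y_i-x_i)\ge 0$ forces equality in the \emph{first} inequality termwise (since each summand is non-negative), and strict convexity then gives $y_i=x_i$ for every $i$; you do not separately need to analyze when the Abel sum vanishes.
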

\begin{lemma}[Theorem 1 in \cite{thompson1975singular}]\label{ap:B}Assume $A,B,C=A+B$ are (not necessarily square) matrices of the same size, with singular values
\[
\alpha_1\geq\alpha_2\geq...,\ \beta_1\geq\beta_2\geq...,\ \gamma_1\geq\gamma_2\geq...,
\]
respectively. Let $i_1<i_2<...<i_m$ and $j_1<j_2<...<j_m$ be positive integers, and set
\[
k_t=i_t+j_t-t,\ t=1,2,...,m.
\]
Then the singular values of $A,B,C$ satisfy 
\[
\sum_{t=1}^m\gamma_{k_t}\leq\sum_{t=1}^m
\alpha_{i_t}+\sum_{t=1}^m\beta_{j_t}.
\]
\end{lemma}

\begin{lemma}\label{lm:ufoneside}
We have the following uniform error bound on $\sin\Theta$ distance
\begin{equation}
\max\{\|\sin\Theta(U_1,\widetilde U_1)\|,\|\sin\Theta(V_1,\widetilde V_1)\|\} \leq \min\left\{\frac{2\|\Delta A\|}{\sigma_r-\sigma_{r+1}},1\right\}.
\label{eq:uniform}
\end{equation}
\end{lemma}
\begin{proof}[Proof of Lemma \ref{lm:ufoneside}]

If $\sigma_r=\sigma_{r+1}$, \eqref{eq:uniform} holds trivially, here we consider the case $\sigma_r>\sigma_{r+1}$. Consider the two possibilities $\sigma_r-\sigma_{r+1}>2\|\Delta A\|$ and $\sigma_r-\sigma_{r+1}\leq 2\|\Delta A\|$. When $\sigma_r-\sigma_{r+1}>2\|\Delta A\|$, this and the Weyl's bound 
\[
|\widetilde\sigma_r-\sigma_r|\leq\|\Delta A\|,
\]
together give
\[\widetilde\sigma_r-\sigma_{r+1}>\sigma_r-\sigma_{r+1}-\|\Delta A\|>\frac{1}{2}(\sigma_r-\sigma_{r+1})>0,
\]
which ensures the assumption in Theorem \ref{thm:two-sided} to hold, and then \eqref{eq:uniform_hat} in Theorem \ref{thm:two-sided} implies 
\[
\|\sin\Theta(U_1,\widetilde U_1)\|=\|U_2^T\widetilde U_1\|\leq\frac{\|\Delta A\|}{\sigma_r-\|\Delta A\|-\sigma_{r+1}}\leq\frac{2\|\Delta A\|}{\sigma_r-\sigma_{r+1}}.
\]
When $\sigma_r-\sigma_{r+1}\leq 2\|\Delta A\|$, we directly have
$$\|U_2^T\widetilde U_1\|\leq 1\leq \frac{2\|\Delta A\|}{\sigma_r-\sigma_{r+1}}.$$ Putting the two cases together, we have \[\|\sin\Theta(U_1,\widetilde U_1)\|\leq\min\{\frac{2\|\Delta A\|}{\sigma_r-\sigma_{r+1}},1\}.
\]
Following the same reasoning, we also have $\|\sin\Theta(V_1,\widetilde V_1)\|\leq\min\{\frac{2\|\Delta A\|}{\sigma_r-\sigma_{r+1}},1\}$, thus \eqref{eq:uniform} holds.
\end{proof}  

\begin{proof}[\textbf{Proof of Theorem \ref{thm:thresh}}]
By Lemma \ref{lm:threh}, 
\begin{align}
\|A_r-\widetilde A_r\| & \leq \left\|\begin{pmatrix} -U_1^T\Delta A\widetilde{V}_1 & -U_1^T\Delta A\widetilde{V}_2 \\ - U_2^TAV_2V_2^T\widetilde{V}_1&  0 \end{pmatrix}\right\| +\left\|\begin{pmatrix} 0 & U_1^T\widetilde{U}_2\widetilde{U}_2^T\widetilde{A}\widetilde{V}_2 \\ -U_2^T\Delta A\widetilde{V}_1 &  0 \end{pmatrix}\right\| \notag \\
& \leq \sqrt{\|U_1^T\Delta A\|^2+\|U_2^TAV_2V_2^T\widetilde{V}_1\|^2}+ \max\{\|U_2^T\Delta A\widetilde{V}_1\|,\|U_1^T\widetilde{U}_2\widetilde{U}_2^T\widetilde{A}\widetilde{V}_2 \| \}  
\notag \\
&= \sqrt{\|U_1^T \Delta A\|^2+\tau^2}+ \max\{\|U_2^T\Delta A\widetilde{V}_1\|,\nu  \}  \label{eq:A_r},
\end{align}
where we have let $\nu =\|U_1^T\widetilde{U}_2\widetilde{U}_2^T\widetilde{A}\widetilde{V}_2 \|$ and $\tau=\|U_2^TAV_2V_2^T\widetilde{V}_1\|$, we next bound $\tau$ and $\nu$.
\begin{align}\label{eq:alpha}
\nu = \|U_1^T\widetilde{U}_2\widetilde{U}_2^T\widetilde{A}\widetilde{V}_2 \| =\|U_1^T\widetilde{U}_2\widetilde{\Sigma}_2\| \leq \widetilde{\sigma}_{r+1}\|U_1^T\widetilde{U}_2\|. 
\end{align}
Due to the Weyl's bound, we also have \begin{equation}\label{eq:weyl}
    |\widetilde{\sigma}_{r+1}-\sigma_{r+1}|<\|\Delta A\|.
\end{equation}
By \eqref{eq:uniform},
\[
\nu\leq(\sigma_{r+1}+\|\Delta A\|)\min\left\{\frac{2\|\Delta A\|}{\sigma_r-\sigma_{r+1}},1\right\}\leq\|\Delta A\|+\sigma_{r+1}\min\left\{\frac{2\|\Delta A\|}{\sigma_r-\sigma_{r+1}},1\right\}.
\]
Similarly, we can also derive
\[
\tau=\|\Sigma_2 V_2^T\widetilde V_1\|\leq\sigma_{r+1}\min\left\{\frac{2\|\Delta A\|}{\sigma_r-\sigma_{r+1}},1\right\}.
\]
Inserting the upper bounds of $\tau$ and $\nu$ back to \eqref{eq:A_r} completes the proof of \eqref{eq:2norm_threholding}.
For Frobenius norm:
\begin{align*}
\|A_r-\widetilde{A}_r\|_F^2 &=\left\|\begin{pmatrix} -U_1^T\Delta A\widetilde{V}_1 & -U_1^T\Delta A\widetilde{V}_2 +U_1^T\widetilde{U}_2\widetilde{U}_2^T\widetilde{A}\widetilde{V}_2 \\ -U_2^T\Delta A\widetilde{V}_1 - U_2^TAV_2V_2^T\widetilde{V}_1&  0 \end{pmatrix}\right\|_F^2 & \\ 
&= \underbrace{\left\|\begin{pmatrix} -U_1^T\Delta A\widetilde{V}_1  \\ -U_2^T\Delta A\widetilde{V}_1 - U_2^TAV_2V_2^T\widetilde{V}_1 \end{pmatrix}\right\|_F^2}_{\coloneqq R_1}+ \underbrace{\left\|\begin{pmatrix} -U_1^T\Delta A\widetilde{V}_2 +U_1^T\widetilde{U}_2\widetilde{U}_2^T\widetilde{A}\widetilde{V}_2 \\  0 \end{pmatrix}\right\|_F^2}_{\coloneqq R_2} .
\end{align*}
Next we bound $R_1$ and $R_2$ separately. First consider $R_2$, let $M\Lambda W^T$ be the singular value decomposition of $U_1^T\widetilde U_2$, where $M\in\mathbb{R}^{r\times r},\ \Lambda\in\mathbb{R}^{r\times r},\ W\in\mathbb{R}^{(n-r)\times r}$. Then
\begin{align*}
R_2&=\|-U_1 \Delta A \widetilde V_2+U_1\widetilde U_2\widetilde\Sigma_2\|_F^2\\
&\leq 2\|U_1^T\Delta A\widetilde{V}_2\|_F^2+2\|U_1^T\widetilde{U}_2\widetilde\Sigma_2\|_F^2\\
&\leq 2\|(\Delta A)_r\|_F^2+2\|U_1^T\widetilde U_2 WW^T\widetilde\Sigma_2\|_F^2\\
&\leq 2\|(\Delta A)_r\|_F^2+2\left(\min\left\{\frac{2\|\Delta A\|}{\sigma_r-\sigma_{r+1}},1\right\}\right)^2\|W^T\widetilde\Sigma_2\|_F^2\\
&\leq 2\|(\Delta A)_r\|_F^2+2\left(\min\left\{\frac{2\|\Delta A\|}{\sigma_r-\sigma_{r+1}},1\right\}\right)^2\sum_{k=1}^r\sigma^2_{r+k}(\widetilde A).
\end{align*}
In the second to last inequality, we used the fact that $\|AB\|_F\leq \|A\|\|B\|_F$ and in the last inequality, we used $\|P_{\Omega}A\|_F\leq \|A_r\|_F$ for any $r$-dimensional subspace $\Omega$.
By Lemma \ref{ap:B},
\[
\sum_{i=1}^k \sigma_{r+i}(\widetilde A)=\sum_{i=1}^k \sigma_{r+i}(A+\Delta A)\leq \sum_{i=1}^k\sigma_i(\Delta A)+\sum_{i=1}^k\sigma_{r+i}(A)=\sum_{i=1}^k\left(\sigma_i(\Delta A)+\sigma_{r+i}(A)\right), 1\leq k\leq r.
\]
From  Lemma \ref{lemma:copy}, we have 
\[
\sum_{k=1}^r\sigma_{r+k}^2(\widetilde A)\leq \sum_{k=1}^r\left(\sigma_k(\Delta A)+\sigma_{r+k}(A)\right)^2\leq \left(\|(\Delta A)_r\|_F+\|(\Sigma_2)_r\|_F\right)^2.
\]
Hence 
\begin{equation}\label{eq:C2}
R_2\leq 2\|(\Delta A)_r\|_F^2+2\left(\min\left\{\frac{2\|\Delta A\|}{\sigma_r-\sigma_{r+1}},1\right\}\right)^2\left(\|(\Delta A)_r\|_F+\|(\Sigma_2)_r\|_F\right)^2.
\end{equation}
Next we consider $R_1$. Notice that 
\[
R_1=\left\|{\begin{pmatrix} -U_1^T\Delta A\widetilde{V}_1  \\ -U_2^T\Delta A\widetilde{V}_1  \end{pmatrix}}+{\begin{pmatrix}0\\-U_2^T A V_2V_2^T\widetilde V_1\end{pmatrix}}\right\|_F^2\leq(\|(\Delta A)_r\|_F+\|\Sigma_2 V_2^T\widetilde V_1\|_F)^2.
\]
Following the same reasoning as in bounding $R_2$, we have 
\begin{equation}\label{eq:C1}
R_1\leq \left(\|(\Delta A)_r\|_F+\|(\Sigma_2)_r\|_F\min\left\{\frac{2\|\Delta A\|}{\sigma_r-\sigma_{r+1}},1\right\}\right)^2.
\end{equation}
Combining \eqref{eq:C2} and \eqref{eq:C1}, we obtain
\[
\|A_r-\widetilde A_r\|_F^2\leq 2\|(\Delta A)_r\|_F^2+3\left(\|(\Delta A)_r\|_F+\|(\Sigma_2)_r\|_F\min\left\{\frac{2\|\Delta A\|}{\sigma_r-\sigma_{r+1}},1\right\}\right)^2.
\]
\end{proof}


\subsection{Proof of Theorem \ref{thm:pca}}\label{subsec:thm27}
\begin{proof}[Proof of Theorem \ref{thm:pca}] Let $\widetilde{V}_1^TV_1=Q_1SQ_2^T$ be the SVD of $\widetilde{V}_1^TV_1$. Define a special rotation $\hat{Q}=Q_1Q_2^T$, and we bound $|||U_1\Sigma_1-\widetilde{U}_1\widetilde{\Sigma}_1\hat{Q}|||$, where $|||\cdot|||$ can be either the the spectral or the Frobenius norm. This yields an upper bound on $\min_{Q\in \mathbb{O}_r}|||U_1\Sigma_1-\widetilde{U}_1\widetilde{\Sigma}_1{Q}|||$. By a direct calculation,
\begin{align}
|||U_1\Sigma_1-\widetilde{U}_1\widetilde{\Sigma}_1\hat{Q}|||&\leq |||U_1\Sigma_1-\widetilde{U}_1\widetilde{\Sigma}_1\widetilde{V}_1^TV_1+\widetilde{U}_1\widetilde{\Sigma}_1\widetilde{V}_1^TV_1-\widetilde{U}_1\widetilde{\Sigma}_1\hat{Q}||| \notag\\ &\leq
|||U_1\Sigma_1-\widetilde{U}_1\widetilde{\Sigma}_1\widetilde{V}_1^TV_1|||+|||\widetilde{U}_1\widetilde{\Sigma}_1\widetilde{V}_1^TV_1-\widetilde{U}_1\widetilde{\Sigma}_1\hat{Q}||| \notag\\
& = |||(U_1\Sigma_1V_1^T-\widetilde{U}_1\widetilde{\Sigma}_1\widetilde{V}_1^T)V_1|||+|||\widetilde{U}_1\widetilde{\Sigma}_1(\widetilde{V}_1^TV_1-\hat{Q})||| \notag\\
& \leq |||U_1\Sigma_1V_1^T-\widetilde{U}_1\widetilde{\Sigma}_1\widetilde{V}_1^T|||+|||\widetilde{U}_1\widetilde{\Sigma}_1(\widetilde{V}_1^TV_1-\hat{Q})|||\notag\\
& = |||A_r-\widetilde{A}_r|||+|||\widetilde{U}_1\widetilde{\Sigma}_1(\widetilde{V}_1^TV_1-\hat{Q})|||. \label{eq:pca_inter}
\end{align}
The first term of \eqref{eq:pca_inter} can be bounded by Theorem \ref{thm:thresh}. Let us focus on the second term.
Let $Z =V_2^T\widetilde{V}_1$. Observe $Z^T Z+\widetilde{V}_1^TV_1(\widetilde{V}_1^TV_1)^T=I_r$, this implies $\widetilde{V}_1^TV_1 = \sqrt{I_r-Z^T Z}\hat{Q}$ (Lemma \ref{lm:sqrt}), where the square root of a positive semi-definite matrix $B$ is defined to be the positive semi-definite  matrix $\widetilde{B}$ such that $\widetilde{B}\widetilde{B}=B$. 

Using this observation on the quantity inside the norm of the second term on the right hand side of \eqref{eq:pca_inter}, we have
\[
\widetilde{U}_1\widetilde{\Sigma}_1(\widetilde{V}_1^TV_1-\hat{Q})=\widetilde{U}_1\widetilde{\Sigma}_1(\sqrt{I-Z^T Z}-I)\hat{Q}=-\widetilde{U}_1\widetilde{\Sigma}_1Z^T Z(\sqrt{I-Z^T Z}+I)^{-1}\hat{Q},
\]
where the last equality used the fact that $(\sqrt{I-Z^T Z}-I)(\sqrt{I-Z^T Z}+I)=-Z^T Z$.
Hence 
\begin{equation}\label{eq:PCA_second}
|||\widetilde{U}_1\widetilde{\Sigma}_1(\widetilde{V}_1^TV_1-\hat{Q})|||\leq |||\widetilde{U}_1\widetilde{\Sigma}_1Z^T Z|||\cdot\|(\sqrt{I-Z^T Z}+I)^{-1}\hat{Q}\| \leq |||\widetilde{\Sigma}_1Z^T|||.
\end{equation}
The last inequality used $\|Z\|\leq 1$, and $\|(\sqrt{I-Z^T Z}+I)^{-1}\|\leq 1$.
Notice that
\begin{align*}
    (\widetilde\Sigma_1 Z^T)^T=Z\widetilde\Sigma_1=V_2^T\widetilde V_1\widetilde\Sigma_1=V_2^T{\widetilde A}^T\widetilde U_1=V_2^T  \Delta A^T\widetilde U_1+V_2^T A^T\widetilde U_1=V_2^T \Delta A^T\widetilde U_1+\Sigma_2^T U_2^T\widetilde U_1.
\end{align*}
Then for the spectral norm of $\widetilde\Sigma_1 Z^T$, we have
\begin{align*}
    \|\widetilde\Sigma_1 Z^T\|\leq\|\Delta A\|+\sigma_{r+1}\|U_2^T\widetilde U_1\|
    \leq \|\Delta A\|+\sigma_{r+1}\min\left\{\frac{2\|\Delta A\|}{\sigma_r-\sigma_{r+1}},1\right\}.
\end{align*}
For the Frobenius norm, we have
\[
\|\widetilde\Sigma_1 Z^T\|_F\leq\|V_2^T \Delta A^T\widetilde U_1\|_F+\|\Sigma_2 U_2^T\widetilde U_1\|_F\leq\|(\Delta A)_r\|_F+\|(\Sigma_2)_r\|_F\min\left\{\frac{2\|\Delta A\|}{\sigma_r-\sigma_{r+1}},1\right\}.
\]
Combining this with \eqref{eq:pca_inter} and \eqref{eq:PCA_second} completes the proof.
\end{proof}
\begin{lemma}\label{lm:sqrt}
Let $A\in \mathbb{R}^{r\times r}$ be a semi-definite matrix with eigenvalues no greater than 1, $B\in \mathbb{R}^{r\times r}$ has SVD $B=U_BS_B V_B^T$. In addition, $A$ and $B$ satisfy $A+BB^T=I$, then
\[
B = \sqrt{I-A}U_B V_B^T.
\]
\end{lemma}
\begin{proof}
Since $BB^T = U_BS_B^2U_B^T$, then $\sqrt{BB^T}= U_BS_BU_B^T$, and therefore $B = \sqrt{BB^T}U_B V_B^T$. By assumption, $BB^T=I-A$, then the result of the lemma follows.
\end{proof}

\normalem
\bibliographystyle{plain}
\bibliography{revision}
\section{Appendix}

\begin{proof}[\textbf{Proof of Lemma \ref{lemma:full_rank}- the full-rank case}]We first bound $\|U_2U_2^T\Delta A V_2V_2^T\widetilde V_1\widetilde\Sigma_1^{-1}\|_{2,\infty}$. To do so, we need Theorem \ref{thm:main2} to obtain the expansion of $V_2^T\widetilde V_1$. Let us first check that in the setting of this lemma (i.e., $21\sigma\sqrt{\bar n}<\sigma_r(A)-\sigma_{r+1}(A)$), the condition in  Theorem \ref{thm:main2} is satisfied with high probability, that is, $\|\mathcal{F}\|<1$, where
\[
\mathcal{F}\left(\begin{matrix}C_1\\C_2\end{matrix}\right) = \left(\begin{matrix}F_U^{21}\circ(\Sigma_2\alpha_{22}^TC_1)+F_U^{21}\circ(\alpha_{22}C_2\widetilde\Sigma_1^T)\\F_V^{21}\circ(\alpha_{22}^TC_1\widetilde\Sigma_1)+F_V^{21}\circ(\Sigma_2^T\alpha_{22}C_2)\end{matrix}\right).
\]
As discussed in the proof of Theorem \ref{thm:2toinf_full}, $\|\Delta A\|\leq 3\sigma\sqrt{\bar n}$ with probability at least $1-e^{-c \bar n}$ with some constant $c$. By the assumption $21\sigma\sqrt{\bar n}<\sigma_r(A)-\sigma_{r+1}(A)$, we have with probability at least $1-e^{-c\bar n}$,
\begin{align*}
&\quad\left\|\mathcal{F}\left(\begin{matrix}C_1\\C_2\end{matrix}\right)\right\|\\
&\leq\|F_U^{21}\circ(\Sigma_2\alpha_{22}^TC_1)\|+\|F_U^{21}\circ(\alpha_{22}C_2\widetilde\Sigma_1^T)\|+\|F_V^{21}\circ(\alpha_{22}^TC_1\widetilde\Sigma_1)\|+\|F_V^{21}\circ(\Sigma_2^T\alpha_{22}C_2)\|\\
&\leq \frac{\sigma_{r+1}}{\widetilde\sigma_r^2-\sigma_{r+1}^2}\|\alpha_{22}\|\|C_1\|+\frac{\widetilde\sigma_{r}}{\widetilde\sigma_r^2-\sigma_{r+1}^2}\|\alpha_{22}\|\|C_2\|+\frac{\widetilde\sigma_{r}}{\widetilde\sigma_r^2-\sigma_{r+1}^2}\|\alpha_{22}\|\|C_1\|+\frac{\sigma_{r+1}}{\widetilde\sigma_r^2-\sigma_{r+1}^2}\|\alpha_{22}\|\|C_2\|\\
&=\frac{\|\alpha_{22}\|}{\widetilde\sigma_r-\sigma_{r+1}}(\|C_1\|+\|C_2\|)\\
&\leq\frac{\|\Delta A\|}{\sigma_r-\sigma_{r+1}-\|\Delta A\|}\cdot{\sqrt{2(\|C_1\|^2+\|C_2\|^2)}}\\
&\leq\frac{\sqrt{2}}{6}\sqrt{\|C_1\|^2+\|C_2\|^2}.
\end{align*}
Here the second inequality is due to Lemma \ref{lemma:Hnorm}. Now we have $\|\mathcal{F}\|<1$ with high probability, which enables us to use Theorem \ref{thm:main2} to decompose $U_2U_2^T\Delta AV_2V_2^T\widetilde V_1\widetilde\Sigma_1^{-1}$. Denote 
\[
a_1=F_U^{21}\circ(\Sigma_2\alpha_{12}^TU_1^T\widetilde U_1), \ a_2=F_U^{21}\circ(\alpha_{21}V_1^T\widetilde V_1\widetilde\Sigma_1^T),\ a_3=F_V^{21}\circ(\alpha_{12}^T U_1^T\widetilde U_1\widetilde\Sigma_1),\ a_4=F_V^{21}\circ(\Sigma_2^T\alpha_{21} V_1^T\widetilde V_1),
\]
\[
f_1(X)=F_U^{21}\circ(\Sigma_2\alpha_{22}^T X),\ f_2(X)=F_U^{21}\circ(\alpha_{22} X \widetilde \Sigma_1^T),\ f_3(X)=F_V^{21}\circ(\alpha_{22}^T X\widetilde\Sigma_1),\ f_4(X)=F_V^{21}\circ(\Sigma_2^T\alpha_{22}X).
\]
By Theorem \ref{thm:main2}, each term in the expansion of $V_2^T\widetilde V_1$ is of the form 
\[
f_{i_1}(f_{i_2}(...(f_{i_k}(a_{i_0})))),\ 1\leq i_0,i_1,...,i_k\leq 4,\ k=0,1,2,....
\]
{\bf Now assume $i_1,...,i_k$ and $k$ are fixed}. 
Let $w$ be the $i$th row in $U_2\alpha_{22}f_{i_1}(f_{i_2}(...(f_{i_k}(a_{i_0}))))\widetilde\Sigma_1^{-1}$, and let $b^T=u_i^T\alpha_{22}$, where $u_i^T$ is the $i$th row of $U_2$. Then $w=b^Tf_{i_1}(f_{i_2}(...(f_{i_k}(a_{i_0}))))\widetilde\Sigma_1^{-1}$. Notice that $a_{i_0}$ and each $f_{i_s},\ 1\leq s\leq k$, either contains $\Sigma_2$ or $\widetilde\Sigma_1$, let $h_{i_s}=1$ if   $f_{i_s}$ contains $\Sigma_2$, and $h_{i_s}=0$ if $f_{i_s}$ contains $\widetilde\Sigma_1$. Let $h_{i_0}=1$  if $a_{i_0}$ contains $\Sigma_2$ and $h_{i_s}=0$ if $a_{i_0}$ contains $\widetilde\Sigma_1$. Also, let $m$ be the total number of times that $\widetilde\Sigma_1$ appears in $f_{i_s}$ and $a_{i_0}$. Then \begin{equation}
\label{eq:c}h_{i_0}+h_{i_1}+...+h_{i_k}+m=k+1.
\end{equation} 
Likewise, each $f_{i_s}, \ 1\leq s\leq k$ either contains $\alpha_{22}$ or $\alpha_{22}^T$. Let $d_{i_s}=\alpha_{22}$ if $f_{i_s}$ contains $\alpha_{22}$ and $d_{i_s}=\alpha_{22}^T$ if it contains $\alpha_{22}^T$. Also, let $\gamma=\alpha_{12}^T$ if $a_{i_0}$ contains $\alpha_{12}^T$ and $\gamma=\alpha_{21}$ if $a_{i_0}$ contains $\alpha_{21}$. Last, denote $\beta=V_1^T\widetilde V_1$ if  $a_{i_0}$ contains $V_1^T\widetilde V_1$  and $\beta=U_1^T\widetilde U_1$, if $a_{i_0}$ contains $U_1^T\widetilde U_1$. For the $\gamma$ and $\beta$ defined above, let $\gamma_l^T$ be the $l$th row of $\gamma$, i.e., $\gamma=[\gamma_1^T;\gamma_2^T;...;\gamma_{n-r}^T]$ and $\beta_i$ be the $i$th column of $\beta$, i.e., $\beta=[\beta_1,\beta_2,...,\beta_r]$. Then for $1\leq j\leq r$, the $j$th entry in $w$ is
\begin{align*}
w_j&=\sum_{l_1}\frac{b_{l_1}\sigma_{r+l_1}^{h_{i_1}}}{\widetilde\sigma_j^2-\sigma_{{r+l_1}}^2}\sum_{l_2}\frac{(d_{i_1})_{l_1l_2}\sigma_{r+l_2}^{h_{i_2}}}{\widetilde\sigma_j^2-\sigma_{r+l_2}^2}...\sum_{l_{k}}\frac{(d_{i_{k-1
}})_{l_{k-1}l_{k}}\sigma_{r+l_{k}}^{h_{i_{k}}}}{\widetilde\sigma_j^2-\sigma_{r+l_{k}}^2}\sum_{l_{0}}\frac{(d_{i_{k
}})_{l_{k}l_{0}}\sigma_{r+l_{0}}^{h_{i_{0}}}}{\widetilde\sigma_j^2-\sigma_{r+l_{0}}^2}\gamma_{l_0}^T(\widetilde\sigma_j^{m-1}\beta_j)\\
&=\langle\underbrace{\sum_{l_1}\frac{b_{l_1}\sigma_{r+l_1}^{h_{i_1}}}{\widetilde\sigma_j^2-\sigma_{{r+l_1}}^2}\sum_{l_2}\frac{(d_{i_1})_{l_1l_2}\sigma_{r+l_2}^{h_{i_2}}}{\widetilde\sigma_j^2-\sigma_{r+l_2}^2}...\sum_{l_{k}}\frac{(d_{i_{k-1
}})_{l_{k-1}l_{k}}\sigma_{r+l_{k}}^{h_{i_{k}}}}{\widetilde\sigma_j^2-\sigma_{r+l_{k}}^2}\sum_{l_{0}}\frac{(d_{i_{k
}})_{l_{k}l_{0}}\sigma_{r+l_{0}}^{h_{i_{0}}}}{\widetilde\sigma_j^2-\sigma_{r+l_{0}}^2}\gamma_{l_0}}_{\equiv M_j},\widetilde\sigma_j^{m-1}\beta_j\rangle.
\end{align*}
In the above, let \[
M_j = \sum_{l_1}\frac{b_{l_1}\sigma_{r+l_1}^{h_{i_1}}}{\widetilde\sigma_j^2-\sigma_{{r+l_1}}^2}\sum_{l_2}\frac{(d_{i_1})_{l_1l_2}\sigma_{r+l_2}^{h_{i_2}}}{\widetilde\sigma_j^2-\sigma_{r+l_2}^2}...\sum_{l_{k}}\frac{(d_{i_{k-1
}})_{l_{k-1}l_{k}}\sigma_{r+l_{k}}^{h_{i_{k}}}}{\widetilde\sigma_j^2-\sigma_{r+l_{k}}^2}\sum_{l_{0}}\frac{(d_{i_{k
}})_{l_{k}l_{0}}\sigma_{r+l_{0}}^{h_{i_{0}}}}{\widetilde\sigma_j^2-\sigma_{r+l_{0}}^2}\gamma_{l_0}.
\]
We first bound $\|M_j\|$. Denote $\eta_j=\sigma_j^2-\widetilde\sigma_j^2$, and $\Delta_{jl}=\sigma_j^2-\sigma_{r+l}^2$. Notice that 
\begin{align*}
\frac{1}{\widetilde\sigma_j^2-\sigma_{r+l}^2}&=\frac{1}{\sigma_j^2-\sigma_{r+l}^2+(\widetilde\sigma_j^2-\sigma_j^2)}=\frac{1}{(\sigma_j^2-\sigma_{r+l}^2)(1 +\frac{\widetilde\sigma_j^2-\sigma_j^2}{\sigma_j^2-\sigma_{r+l}^2})} \\ &=\frac{1}{\Delta_{jl}(1-\frac{\eta_j}{\Delta_{jl}})}= \frac{1}{\Delta_{jl}}(1+\frac{\eta_j}{\Delta_{jl}}+(\frac{\eta_j}{\Delta_{jl}})^2+...).
\end{align*}
Hence 
\begin{align*}
    \|M_j\|&=\left\|\sum_{l_1,..., l_k,l_0}\frac{b_{l_1}\prod_{s=1}^k(d_{i_s})_{l_{s}l_{s+1}}\prod_{s=0}^k\sigma_{r+l_s}^{h_{i_s}}}{\prod_{s=0}^k{\Delta_{jl_s}}}\prod_{s=0}^k\left(1+\frac{\eta_j}{\Delta_{jl_s}}+(\frac{\eta_j}{\Delta_{jl_s}})^2+...\right){\gamma_{l_0}}\right\|\\
    &=\left\|\sum_{q_0,q_1,...,q_k=0}^\infty\sum_{l_1,..., l_k,l_0}\frac{b_{l_1}\prod_{s=1}^k(d_{i_s})_{l_{s}l_{s+1}}\prod_{s=0}^k\sigma_{r+l_s}^{h_{i_s}}}{\big(\prod_{s=0}^k{\Delta_{jl}\big)\prod_{s=0}^k\Delta_{jl_s}^{q_s}}}{\gamma_{l_0}}\cdot\eta_j^{\sum_{s=0}^k q_s}\right\|\\
    &\leq\sum_{q_0,q_1,...,q_k=0}^\infty\underbrace{\left\|\sum_{l_1,..., l_k,l_0}\frac{b_{l_1}\prod_{s=1}^k(d_{i_s})_{l_{s}l_{s+1}}\prod_{s=0}^k\sigma_{r+l_s}^{h_{i_s}}}{\prod_{s=0}^k{\Delta_{jl_s}^{1+q_s}}}{\gamma_{l_0}}\right\|}_{T(j,q_0,...,q_k)}\cdot \left|\eta_j^{\sum_{s=0}^k q_s}\right|.
\end{align*}
Here we let $l_{k+1}=l_0$.  In the above, denote 
\[
T(j,q_0,...,q_k)\equiv\sum_{l_1,..., l_k,l_0}\frac{b_{l_1}\prod_{s=1}^k(d_{i_s})_{l_{s}l_{s+1}}\prod_{s=0}^k\sigma_{r+l_s}^{h_{i_s}}}{\prod_{s=0}^k{\Delta_{j l_s}^{1+q_s}}}{\gamma_{l_0}}\in\mathbb{R}^{r\times 1}.
\] Next, we bound the $\ell_2$ norm of $T(j,q_0,...,q_k)$. 
Notice that we can rewrite $T(j,q_0,...,q_k)$ in the following way
\[
T(j,q_0,...,q_k)=\left(b^T\hat f_{i_1}^{q_1} \hat f_{i_2}^{q_2}... \hat f_{i_k}^{q_{k}}\hat a_{i_0}^{q_0}\right)^T,\ 1\leq i_0,i_2,...,i_k\leq 4,\ k\geq 0.
\]
Here matrices $\hat f_{i_s}$ are modified from of the functions $f_{i_s}$, and matrix $\hat a_{i_0}$ is modified from the function $a_{i_0}$. Explicitly,
\[
\hat a_1^{q}=\hat F_U^{21,q}\Sigma_2\alpha_{12}^T, \ \hat a_2^{q}=\hat F_U^{21,q}\alpha_{21},\ \hat a_3^{q}=\hat F_V^{12,q}\alpha_{12}^T ,\ \hat a_4^{q}=\hat F_V^{21,q}\Sigma_2^T\alpha_{21},
\]
\[
\hat f_1^q=\hat F_U^{21,q}\Sigma_2\alpha_{22}^T,\ \hat f_2^q=\hat F_U^{21,q}\alpha_{22},\ \hat f_3^q=\hat F_V^{21,q}\alpha_{22}^T ,\ \hat f_4^q=
\hat F_V^{21,q}\Sigma_2^T\alpha_{22},
\]
where $\hat F_U^{21,q}$ and $\hat F_V^{21,q}$ are diagonal matrices with diagonal entries
\[
(\hat F_U^{21,q})_{i'-r,i'-r}=\frac{1}{(\sigma_{j}^2-\sigma_{i'}^2)^{1+q}},\ r+1\leq i'\leq n,
\]
\[
(\hat F_V^{21,q})_{i'-r,i'-r}=\frac{1}{(\sigma_{j}^2-\sigma_{i'}^2)^{1+q}},\ r+1\leq i'\leq m.
\]
Similar as in Theorem \ref{thm:main}, if $i'>\min\{n,m\}$, we define $\sigma_{i'}$ to be 0.

As before, In the above expression of $T(j,q_0,...,q_k)$, $
\hat a^{q_0}_{i_0}$ either contains $\alpha_{21}$ or $\alpha_{12}^T$. If it contains the former, let $\gamma$ be the former, and it contains the latter, let $\gamma$ be the latter. It is easy to check that this $\gamma$ coincides with the $\gamma$ defined in the paragraph under \eqref{eq:c}.

Conditional on $\alpha_{22}$, the only random variable in $T(j,q_0,...,q_k)$ is $\alpha_{21}$ or $\alpha_{12}^T$, that is $\gamma$. Therefore, if we write $T(j,q_0,...,q_k)=G(\gamma)^T$, then the linear operator $G$ is independent of $\gamma$, and it is straightforward to check that
\[
\|G\|\leq\underbrace{\|b\|\frac{\|\alpha_{22}\|^k\sigma_{r+1}^{\sum_{s=0}^k h_{i_s}}}{(\sigma_j^2-\sigma_{r+1}^2)^{k+1+\sum_{s=0}^k q_s}}}_{\equiv K},\ 1\leq j\leq r.
\]
Again, conditional on $\alpha_{22}$, since for each $1\leq p\leq r$, the $p$th entry of $T(j,q_0,...,q_k)$ only depends on the $p$th column of $\alpha_{21}$ or $\alpha_{12}^T$,  then different entries of $T(j,q_0,...,q_k)$ are independent of each other,  
 each following a Gaussian distribution $\mathcal{N}(0,\sigma^2\xi_p^2)$, and $\xi_p\leq K,\ 1\leq p\leq r $, where $K$ denotes the above bound.  By Theorem 3.1.1 in \cite{vershynin2018high}, there exists some constant $c$ such that
\[
\mathbb{P}(\|T(j,q_0,...,q_k)\|-K\sigma\sqrt{r}\geq t)\leq 2e^{-ct^2/\sigma^2K^2}.
\]
Let $t =K\sigma\sqrt{\log (n^3\cdot 2^{\sum_{s=0}^k q_s}\cdot 8^k r)/c}$, then with probability at least $1-\frac{2}{n^3\cdot 2^{\sum_{s=0}^k q_s}\cdot 8^kr}$,
\begin{align*}
\|T(j,q_0,...,q_k)\|\leq c_1\sigma\|b\|\frac{\sigma_{r+1}^{\sum_{s=0}^k h_{i_s}}\|\alpha_{22}\|^k}{(\sigma_j^2-\sigma_{r+1}^2)^{k+1+\sum_{s=0}^k q_s}}(\sqrt{\log n}+\sum_{s=0}^k \sqrt{q_s}+\sqrt{k}+\sqrt{r}),
\end{align*}
where $c_1$ is some constant. Hence 
\begin{align*}
    \|M_j\|&\leq \sum_{q_0,q_1,...,q_k=0}^\infty\left\|T(j,q_0,...,q_k)\right\|\cdot\left|\eta_j^{\sum_{s=0}^k q_s}\right|\\
    &\leq c_1\sigma \frac{\|b\|\sigma_{r+1}^{\sum_{s=0}^k h_{i_s}}\|\alpha_{22}\|^k}{(\sigma_j^2-\sigma_{r+1}^2)^{k+1}}\sum_{q_0,q_1,...,q_k=0}^\infty(\sqrt{\log n}+\sum_{s=0}^k\sqrt{ q_s}+\sqrt{k}+\sqrt{r})\cdot \left|\frac{\eta_j}{\sigma_j^2-\sigma_{r+1}^2}\right|^{\sum_{s=0}^k q_s}\\
    &\leq c_1\sigma \frac{\|b\|\sigma_{r+1}^{\sum_{s=0}^k h_{i_s}}\|\alpha_{22}\|^k}{(\sigma_j^2-\sigma_{r+1}^2)^{k+1}}\left((\sqrt{\log n}+\sqrt{k}+\sqrt{r})\frac{1}{(1-\frac{|\eta_j|}{\sigma_j^2-\sigma_{r+1}^2})^{k+1}}+\frac{2(k+1)}{(1-\frac{|\eta_j|}{\sigma_j^2-\sigma_{r+1}^2})^{k+1}}\right)\\
    &=c_2\sigma \frac{\|b\|\sigma_{r+1}^{\sum_{s=0}^k h_{i_s}}\|\alpha_{22}\|^k}{(\sigma_j^2-\sigma_{r+1}^2-|\eta_j|)^{k+1}}(\sqrt{\log n}+\sqrt{r}+k),
\end{align*}
where $c_2$ is a constant.
In the second inequality above, we used the fact that
\begin{align*}
\sum_{q_0,q_1,...,q_k=0}^\infty\left|\frac{\eta_j}{\sigma_j^2-\sigma_{r+1}^2}\right|^{\sum_{s=0}^k q_s} &=\sum_{q_0,q_1,...,q_{k-1}=0}^\infty\left|\frac{\eta_j}{\sigma_j^2-\sigma_{r+1}^2}\right|^{\sum_{s=0}^{k-1} q_s}\sum_{q_k=0}^\infty\left|\frac{\eta_j}{\sigma_j^2-\sigma_{r+1}^2}\right|^{q_k}\\
&=\frac{1}{(1-\frac{|\eta_j|}{\sigma_j^2-\sigma_{r+1}^2})}\sum_{q_0,q_1,...,q_{k-1}=0}^\infty\big|\frac{\eta_j}{\sigma_j^2-\sigma_{r+1}^2}\big|^{\sum_{s=0}^{k-1} q_s}\\
&=...=\frac{1}{(1-\frac{|\eta_j|}{\sigma_j^2-\sigma_{r+1}^2})^{k+1}}.
\end{align*}
and that
\begin{align*}
&\sum_{q_0,q_1,...,q_k=0}^\infty(\sum_{s=0}^k\sqrt{q_s})\big|\frac{\eta_j}{\sigma_j^2-\sigma_{r+1}^2}\big|^{\sum_{s=0}^k q_s}\\ &=\sum_{s=0}^k\sum_{q_0,...,q_{s-1},q_{s+1},...,q_k=0}^\infty\big|\frac{\eta_j}{\sigma_j^2-\sigma_{r+1}^2}\big|^{\sum_{l\neq s}q_l}\sum_{q_s=0}^\infty\sqrt{q_s}\big|\frac{\eta_j}{\sigma_j^2-\sigma_{r+1}^2}\big|^{q_s}\\
&\leq\sum_{s=0}^k\frac{2}{(1-\frac{|\eta_j|}{\sigma_j^2-\sigma_{r+1}^2})}\sum_{q_0,...,q_{s-1},q_{s+1},...,q_k=0}^\infty\big|\frac{\eta_j}{\sigma_j^2-\sigma_{r+1}^2}\big|^{\sum_{l\neq s} q_l}\\
&=\frac{2(k+1)}{(1-\frac{|\eta_j|}{\sigma_j^2-\sigma_{r+1}^2})^{k+1}}.
\end{align*}
Here the inequality is due to \eqref{eq:exp_sum}, which holds under the condition 
\begin{align*}
    \frac{|\eta_j|}{\sigma_j^2-\sigma_{r+1}^2}&=\frac{|\widetilde\sigma_j^2-\sigma_j^2|}{\sigma_j^2-\sigma_{r+1}^2}\leq \frac{(\sigma_j+\|\Delta A\|)^2-\sigma_j^2}{\sigma_j^2-\sigma_{r+1}^2}\\
    &\leq\frac{2\sigma_j\|\Delta A\|+\|\Delta A\|^2}{(\sigma_j-\sigma_{r+1})(\sigma_j+\sigma_{r+1})}
    \leq \frac{(2\sigma_j+\frac{1}{7}\sigma_j)\|\Delta A\|}{7\|\Delta A\|\sigma_j}<\frac{1}{2}.
\end{align*}
Therefore with probability at least $1-\frac{4}{n^3\cdot 4^k r}$,
\begin{align*}
\|w_j\|&\leq\|M_j\|\cdot\widetilde\sigma_j^{m-1}\|\beta_j\|\\
&\leq c_2\sigma \frac{\|b\|\widetilde\sigma_j^{m-1}\sigma_{r+1}^{\sum_{s=0}^k h_{i_s}}\|\alpha_{22}\|^k}{(\sigma_j^2-\sigma_{r+1}^2-|\eta_j|)(\sigma_j^2-\sigma_{r+1}^2-|\eta_j|)^k}(\sqrt{\log n}+\sqrt{r}+k)\\
&\leq c_2\sigma\frac{\|b\|(\frac{8}{7}\sigma_j)^k\|\Delta A\|^k}{\frac{2}{3}(\sigma_j^2-\sigma_{r+1}^2)(\frac{34}{7}\sigma_j\|\Delta A\|)^k}(\sqrt{\log n}+\sqrt{r}+k)\\
&\leq \frac{3}{2}c_2\sigma\frac{\|b\|}{\sigma_r^2-\sigma_{r+1}^2}(\frac{4}{17})^k(\sqrt{\log n}+\sqrt{r}+k).
\end{align*}
Here the third inequality is due to $\sum_{s=0}^k h_{i_s}+m-1=k$ and 
\begin{align*}
\sigma_j^2-\sigma_{r+1}^2-|\widetilde\sigma_j^2-\sigma_j^2|&\geq\sigma_j(\sigma_j-\sigma_{r+1})-((\sigma_j+\|\Delta A\|)^2-\sigma_j^2)\\
&\geq7\sigma_j\|\Delta A\|-2\sigma_j\|\Delta A\|-\|\Delta A\|^2\geq \frac{34}{7}\sigma_j\|\Delta A\|,
\end{align*}
as well as
\begin{align*}
    \sigma_j^2-\sigma_{r+1}^2-|\widetilde\sigma_j^2-\sigma_j^2|&\geq \sigma_j^2-\sigma_{r+1}^2-2\sigma_j\|\Delta A\|-\|\Delta A\|^2\\
    &\geq\sigma_j^2-\sigma_{r+1}^2-\|\Delta A\|(2\sigma_j+\|\Delta A\|)\\
    &\geq \sigma_j^2-\sigma_{r+1}^2-\frac{1}{7}(\sigma_j-\sigma_{r+1})\cdot\frac{15}{7}(\sigma_j+\sigma_{r+1})\\
    &\geq\frac{2}{3}(\sigma_j^2-\sigma_{r+1}^2).
\end{align*}
With probability at least $1-\frac{4}{4^kn^3}$,
\[ \|w\|\leq \frac{3}{2}c_2\sigma\frac{\sqrt{r}\|u_i^T\alpha_{22}\|}{\sigma_r^2-\sigma_{r+1}^2}(\frac{4}{17})^k(\sqrt{\log n}+\sqrt{r}+k).
\]
By Theorem \ref{thm:main2}, we can see that there are $2^{k+1}$ terms in the expansion of $V_2^T\widetilde V_1$ that has order $k$, hence by \eqref{eq:exp_sum}, with probability at least $1-\frac{16}{n^3}$,
\[
\|u_i^T\alpha_{22}V_2^T\widetilde V_1\widetilde\Sigma_1^{-1}\|\leq\sum_{k=0}^\infty 3c_2\sigma\frac{\sqrt{r}\|u_i^T\alpha_{22}\|}{\sigma_r^2-\sigma_{r+1}^2}(\frac{8}{17})^k(\sqrt{\log n}+\sqrt{r}+k)\leq C\sigma\frac{(\sqrt{r\log n}+r)\|u_i^T\alpha_{22}\|}{\sigma_r^2-\sigma_{r+1}^2}.
\]
By the union bound, with probability at least $1-\frac{16}{n^2}$,
\begin{align*}
\|U_2U_2^T\Delta A V_2V_2^T\widetilde V_1\widetilde\Sigma_1^{-1}\|_{2,\infty}&\leq C\sigma\frac{(\sqrt{r\log n}+r)\|\alpha_{22}\|}{\sigma_r^2-\sigma_{r+1}^2}\\&=C\sigma\frac{(\sqrt{r\log n}+r)\|{\Delta A}\|}{(\sigma_r-\sigma_{r+1})(\sigma_r+\sigma_{r+1})}\leq C\sigma\frac{\sqrt{r\log n}+r}{\sigma_r+\sigma_{r+1}}.
\end{align*}
Next, we consider $\|U_2\Sigma_2 V_2^T\widetilde V_1\widetilde\Sigma_1^{-1}\|_{2,\infty}$. Following the same reasoning, we have with probability at least $1-\frac{16}{n^2}$,
\[
\|U_2\Sigma_2 V_2^T\widetilde V_1\widetilde\Sigma_1^{-1}\|_{2,\infty}\leq C\sigma\frac{(\sqrt{r\log n}+r)\|\Sigma_2\|}{\sigma_r^2-\sigma_{r+1}^2}= C\sigma\frac{(\sqrt{r\log n}+r)\sigma_{r+1}}{\sigma_r^2-\sigma_{r+1}^2}.
\]
Combining the above two bounds,
\begin{align*}
\max\{\|U_2\Sigma_2 V_2^T\widetilde V_1\widetilde\Sigma_1^{-1}\|_{2,\infty},\|U_2U_2^T\Delta A V_2V_2^T\widetilde V_1\widetilde\Sigma_1^{-1}\|_{2,\infty}\} & \leq C\sigma(\frac{(\sqrt{r\log n}+r)\sigma_{r+1}}{\sigma_r^2-\sigma_{r+1}^2}+\frac{\sqrt{r\log n}+r}{\sigma_r+\sigma_{r+1}}) \\
& \leq C\sigma\frac{\sqrt{r\log n}+r}{\sigma_r-\sigma_{r+1}}.
\end{align*}
\end{proof}

\end{document}